\newcommand{\bb}{\mathbb}
\newcommand{\Gal}{\operatorname{Gal}}
\newcommand{\Lap}{\triangle}
\newcommand{\Lip}{\operatorname{Lip}}
\newcommand{\an}{\operatorname{an}}
\newcommand{\vphi}{\varphi}
\newcommand{\sph}{\operatorname{sph}}
\newcommand{\Diag}{\operatorname{Diag}}
\newcommand{\bbf}{\mathbf}
\newcommand{\ovl}{\overline}
\newcommand{\berkP}{\mathbb{P}^1_{\operatorname{Berk},v}}
\newcommand{\berkA}{\mathbb{A}^1_{\operatorname{Berk},v}}
\newcommand{\cal}{\mathcal}
\newcommand{\Spec}{\operatorname{Spec}}
\newcommand{\ord}{\operatorname{ord}}
\newcommand{\fin}{\operatorname{fin}}
\newcommand{\tor}{\operatorname{tor}}
\newcommand{\eps}{\epsilon}
\newcommand{\supp}{\operatorname{supp}}
\newtheorem{theorem}{Theorem}
\numberwithin{theorem}{section}
\newtheorem{proposition}[theorem]{Proposition}
\numberwithin{example}{section}
\numberwithin{definition}{section}
\newtheorem{corollary}[theorem]{Corollary}
\newtheorem{conjecture}[theorem]{Conjecture}
\title[Uniform Bounds on S-Integral Torsion Points]{Uniform Bounds on S-Integral Torsion Points for $\bb{G}_m$ and Elliptic Curves}
\author{Jit Wu Yap}
\begin{document}

\begin{abstract}
    Let $K$ be a number field, $S$ a finite set of places. For $\bb{G}_m$ or an elliptic curve $E$ defined over $K$, we establish uniformity results on the number of $S$-integral torsion points relative to a non-torsion point $\beta$, as $\beta$ varies over number fields of bounded degree. In particular for $\bb{G}_m$, if $D$ is a positive integer, we prove a uniform bound on the degree of a torsion point $\zeta$ that is $S$-integral relative to a non-torsion point $\beta$ with degree $\leq D$. 
\end{abstract}

\maketitle

\tableofcontents

\section{Introduction}
\subsection{Statements and Results}
Let $K$ be a number field and $S$ a finite set of places of $K$ that includes all archimedean places. Given two points $\alpha,\beta: \Spec \ovl{K} \to \bb{P}^1_K$, we say that they are $S$-integral relative to each other if their Zariski closures in $\bb{P}^1_{O_K}$ do not intersect outside $S$. Let $\mu_{\infty}$ denote the set of all roots of unity $\zeta$, where $\zeta^n = 1$ for some $n$. In \cite{BIR08}, Baker, Ih and Rumely prove the following result.

\begin{theorem}[Theorem 2.1, \cite{BIR08}] \label{IntroBIR1}
Let $K$ be a number field and $S$ a finite set of places including all archimedean places. For each $\beta \in \ovl{K}^{\times} \setminus \mu_{\infty}$, the set of $\zeta \in \mu_{\infty}$ such that $\zeta$ is $S$-integral relative to $\beta$ is finite.
\end{theorem}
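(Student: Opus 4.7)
The plan is to derive a contradiction by combining the product formula with a Mahler-measure limit at every place. I would assume, for contradiction, that the set of $\zeta \in \mu_\infty$ which are $S$-integral relative to $\beta$ is infinite. Since any number field of bounded degree contains only finitely many roots of unity, this yields a sequence $\zeta_n$ with $d_n := [K(\zeta_n):K] \to \infty$. The main auxiliary object is the norm
\[
\Psi_n := \prod_{\sigma: K(\zeta_n) \hookrightarrow \ovl{K}} (\beta - \sigma\zeta_n),
\]
which lies in $K(\beta)$ and is nonzero because $\beta \notin \mu_\infty$, so the product formula yields $\sum_v \log|\Psi_n|_v = 0$.

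First I would use $S$-integrality to compute $|\Psi_n|_v$ exactly at $v \notin S$ non-archimedean: writing the $v$-adic chordal distance as
\[
[\sigma\zeta_n,\beta]_v = \frac{|\sigma\zeta_n - \beta|_v}{\max(1,|\beta|_v)}
\]
(using $|\sigma\zeta_n|_v = 1$), $S$-integrality forces $[\sigma\zeta_n,\beta]_v = 1$, hence $|\sigma\zeta_n - \beta|_v = \max(1,|\beta|_v)$ and consequently $\frac{1}{d_n}\log|\Psi_n|_v = \log^+|\beta|_v$ for every $n$. At each $v \in S$ I would invoke arithmetic equidistribution of the Galois orbits of $\zeta_n$: Bilu's theorem at archimedean $v$ and the Berkovich analogue of Baker--Rumely / Chambert-Loir / Favre--Rivera-Letelier at non-archimedean $v \in S$. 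The normalized counting measures of the Galois orbits converge weakly to the equilibrium measure $\mu_v$ of the power map, namely the Haar measure on the unit circle at archimedean $v$ and the Dirac mass at the Gauss point of $\berkP$ at non-archimedean $v$, and a direct computation gives
\[
\int \log|z-\beta|_v\,d\mu_v(z) = \log^+|\beta|_v,
\]
so that $\frac{1}{d_n}\log|\Psi_n|_v \to \log^+|\beta|_v$ at each $v \in S$ as well.

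Dividing the product formula by $d_n$ and letting $n \to \infty$ then collapses all local contributions to $\sum_v \log^+|\beta|_v = h(\beta)$, forcing $h(\beta) = 0$; by Kronecker's theorem this contradicts $\beta \in \ovl{K}^\times \setminus \mu_\infty$. The main obstacle will be justifying the limit at $v \in S$: the test function $\log|z - \beta|_v$ has a logarithmic singularity at $z = \beta$, so standard continuous-function equidistribution does not apply directly. I would handle this by the usual truncation, replacing the test function by $\max(\log|z-\beta|_v, -M)$, applying equidistribution to that continuous truncation, and then letting $M \to \infty$, controlling the resulting error by noting that only a bounded number of $\sigma\zeta_n$ can cluster near $\beta$ at any fixed $v$ (since $\beta$ is not among the zeros of $z^{N_n}-1$). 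This logarithmic-singularity upgrade of equidistribution is precisely what the Berkovich-space equidistribution theorems are designed to provide.
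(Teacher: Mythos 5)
Your skeleton --- the product formula applied to $\Psi_n$, the exact local computation at $v \notin S$ forced by $S$-integrality, equidistribution of the Galois orbits at $v \in S$, and the conclusion $h(\beta) = 0$ contradicting Kronecker --- is precisely the Baker--Ih--Rumely strategy that this paper takes as its starting point (the statement is cited from \cite{BIR08}; the mechanism is exactly Proposition \ref{Pairing3} and Theorem \ref{LogEquibSIntegral1} specialized to $\vphi(z) = z^d$). The routine parts are fine, modulo the standard reductions of replacing $K$ by $K(\beta)$ and enlarging $S$ to contain the archimedean places. The problem is that the one step you defer to the end --- the truncation error, i.e.\ logarithmic equidistribution at $v \in S$ --- is the entire content of the theorem, and your proposed justification for it does not work.

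Concretely, the claim that ``only a bounded number of $\sigma\zeta_n$ can cluster near $\beta$ at any fixed $v$'' is false at an archimedean place: if $\beta$ lies on the unit circle, the number of conjugates of $\zeta_n$ in a fixed disc $D(\beta,\epsilon)$ grows like $\epsilon\, d_n$. Equidistribution (or a discrepancy bound) controls the \emph{proportion} of conjugates with $e^{-M} \leq |\sigma\zeta_n - \beta|_v < 1$, but says nothing about conjugates extremely close to $\beta$, and the observation that $\beta$ is not a root of $z^{N_n}-1$ only makes each term finite. A single conjugate with $|\sigma\zeta_n - \beta|_v < e^{-d_n^2}$ would contribute $-d_n$ to $\frac{1}{d_n}\log|\Psi_n|_v$ and destroy the limit; the generic Liouville inequality only gives $\log|\sigma\zeta_n - \beta|_v \geq -Cd_n$, which after normalization leaves a bounded, not vanishing, error. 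What is genuinely needed is the Diophantine input $\max_\sigma \log|\sigma\zeta_n-\beta|_v^{-1} = o(d_n)$: at archimedean $v$ this is a linear form in logarithms, $|2\pi i k - N_n\log\beta|$, handled by Baker-type estimates (Theorem \ref{Baker} of Laurent--Mignotte--Nesterenko via Corollary \ref{BakerCorollary1}, giving a bound polynomial in $\log N_n$), and at non-archimedean $v \in S$ it follows from the ultrametric separation of roots of unity (Proposition \ref{PowerBound2}). With that estimate your truncation argument closes; without it the proof has a gap exactly where the difficulty lies.
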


As observed in \cite{BIR08}, it is necessary that $\beta$ not be a root of unity. Our first result is a uniform version of Theorem \ref{IntroBIR1}. For an algebraic number $x \in \ovl{\bb{Q}}$ and a number field $K$, we let $\deg_K(x) = |\Gal(\ovl{K}/K) \cdot x|$ denote the size of the $\Gal(\ovl{K}/K)$-orbit of $x$. 

\begin{theorem} \label{IntroUniformIh1}
Let $S$ be a finite set of places of $\bb{Q}$ including all archimedean places and $K$ a number field. Then there exists a constant $C = C([K:\bb{Q}],S)$ such that the following holds: for any $\beta \in K^{\times} \setminus \mu_{\infty}$, if $\zeta \in \mu_{\infty}$ is $S$-integral relative to $\beta$ then $\deg_{\bb{Q}}(\zeta) < C$.
\end{theorem}

Thus for a fixed number field $K$, there are only finitely many roots of unity $\zeta$ that can be $S$-integral relative to any $\beta \in K^* \setminus \mu_{\infty}$. In fact, our statement is stronger as we allow $\beta$ to vary over number fields $K$ with $[K:\bb{Q}] \leq D$ for some positive integer $D \geq 1$, which is why in our statement we let $S$ be a finite set of places of $\bb{Q}$ rather than $K$.
\par 
In \cite{GI13}, Grant and Ih show how one can deduce using results of Schinzel a uniform bound on the order of a root of unity $\zeta$ that is $S$-integral relative to a $S$-unit $\beta \in K^* \setminus \mu_{\infty}$. Here, our results hold without any condition on $|\beta|_v$. 
\par 
A natural question to ask is how $C$ depends on $[K:\bb{Q}]$. In \cite{Sil95}, a construction of Boyd is given which produces a sequence $\alpha_n \in \ovl{\bb{Q}}$ such that $\alpha_n$ is $S$-integral relative to some root of unity of order $> \deg_{\bb{Q}}(\alpha_n)^{O(1/\log \log \deg_{\bb{Q}}(\alpha_n))}$, with $\deg_{\bb{Q}}(\alpha_n)$ tending to infinity. Hence one might expect that $C$ should grow polynomially with $[K:\bb{Q}]$. Our proof of Theorem \ref{IntroUniformIh1} only provides an upper bound that grows exponentially with $[K:\bb{Q}]$, but if one allows a small set of exceptions for each $\beta$, it is possible to get a polynomial bound. We let $S_{\fin}$ denote the subset of $S$ consisting exactly all non-archimedean places.

\begin{theorem} \label{IntroUniformIh3}
Let $K$ be a number field and $S$ be a finite set of places of $K$ including all archimedean places. Then there exists a constant $c > 0$, independent of $K$ and $S$, such that for all $\beta \in K^{\times} \setminus \mu_{\infty}$, the set
$$\{\zeta \in \mu_{\infty} : \deg_{K}(\zeta) > c |S|^3 [K:\bb{Q}]^{6} \text{ and } \zeta \text{ is S-integral relative to } \beta\}$$
is a union of at most $|S_{\fin}|$ $\Gal(\ovl{K}/K)$-orbits where $S_{\fin}$ is the susbet of $S$ consisting exactly of all finite places.
\end{theorem}

For example, if $S = \{2,3,\infty\}$, then for each $\beta \in \ovl{K}^{\times} \setminus \mu_{\infty}$ with $[\bb{Q}(\beta):\bb{Q}] \leq D$, except for $2D$ possible $\Gal(\ovl{\bb{Q}}/\bb{Q})$-orbits as exceptions, all other roots of unity $\zeta$ that are $S$-integral relative to $\beta$ must have $< cD^{6}$ conjugates, where $c > 0$ is a constant independent of $D$ and $\beta$. Here, the $2D$ comes from taking all the places in $\bb{Q}(\beta)$ that live above $S$. We may view these exceptions in the same way exceptions arise from Thue--Siegel--Roth. 
\par 
Baker--Ih--Rumely also prove an analogous result for elliptic curves $E$, where $\mu_{\infty}$ is replaced by the set of all torsion points of $E$, which we denote by $E_{\tor}$.. Our next theorem is the analogue of Theorem \ref{IntroUniformIh3} for elliptic curves $E$ with complex multiplication. 

\begin{theorem} \label{IntroUniformIh2}
Let $K$ be a number field and $S$ be a finite set of places of $K$ including all archimedean places. Let $E$ be an elliptic curve with complex multiplication defined over $K$ and let $L$ be a finite extension of $K$. Then there exists a constant $C = C([L:K],S)$ such that the following holds: if $\beta \in E(L) \setminus E_{\tor}$ is $S$-integral relative to some $z \in E_{\tor}$, then $\deg_K(z) < C$. 
\end{theorem}

The assumption that our elliptic curve has complex multiplication is needed in our proof of Theorem \ref{IntroUniformIh2} for technical reasons. For our analogue of Theorem \ref{IntroUniformIh3} where we allow finitely many exceptions, we are able to prove a version that holds for all elliptic curves $E$ without any assumption of complex multiplication. 

\begin{theorem} \label{IntroUniformIh4}
Let $K$ be a number field and $E$ an elliptic curve defined over $K$ that is semistable. Let $L$ be a finite extension of $K$ and $S$ a finite set of places of $L$ including all archimedean places. Then there exists a constant $c = c(E,K) > 0$, independent of $L$, such that for all $\beta \in E(L) \setminus E_{\tor}$, the set
$$\{ z \in E_{\tor} \mid \deg_{K}(x) > c |S|^3 [L:K]^{11} \text{ and } z \text{ is S-integral relative to } \beta \}$$
is a union of at most $|S|$ $\Gal(\ovl{K}/L)$-orbits.
\end{theorem}

Here, we have to allow an exception for every archimedean place too and not just non-archimedean places due to the upper bounds coming from linear forms in elliptic logarithms being not strong enough. We also have to assume that $E$ has semistable reduction over $K$, but that is not so important as every elliptic curve achieves semistable reduction after base changing to a finite extension.

\subsection{Overview of Proofs} 
The main idea is the notion of logarithmic equidistribution. We first explain the case of $\bb{G}_m$. Let $K$ be a number field and let $h(x)$ denote the usual logarithmic Weil height on $\bb{P}^1(\ovl{K})$. Then if we let $\mu$ denote the uniform probability measure on the unit circle, it is known by Bilu \cite{Bil97} that if $(x_n)$ is a distinct sequence of elements of $\ovl{K}$ with $h(x_n) \to 0$, then the Galois orbits of $x_n$ equidistribute to $\mu$. More precisely, if one lets $F_n$ be the Galois orbit of $x_n$ over $K$, then for any continuous function $f: \bb{P}^1(\bb{C}) \to \bb{R}$, we have
\begin{equation} \label{eq: Equib1}
\lim_{n \to \infty} \frac{1}{|F_n|} \sum_{x \in F_n} f(x) \to \int f d \mu.
\end{equation}
Now fix an archimedean place $v$ of $K$ along with an extension to $\ovl{K}$. This gives rise to an embedding of $\ovl{K} \xrightarrow{} \bb{C}$. The strategy of Baker--Ih--Rumely \cite{BIR08} is to prove that \eqref{eq: Equib1} also holds for the function $\log |x-\beta|_v$, which is not continuous as it has a logarithmic pole at $\beta$, when $(x_n)$ is a sequence of torsion points. This is achieved by proving that not any point of $F_n$ is too close to $\beta$ using linear forms in logarithms, and then proving a quantitative estimate for the convergence in \eqref{eq: Equib1}. 
\par 
For elliptic curves $E$ that are defined over $K$, there is a similar statement proven by Szpiro--Ullmo--Zhang \cite{SUZ97}. Here, one has to fix an archimedean place $v$ of $K$ along with an extension to $\ovl{K}$, which gives an embedding $\ovl{K} \xhookrightarrow{} \bb{C}$. One then replaces $\mu$ with the Haar measure on $E(\bb{C}_v)$ and the Weil height with the Neron--Tate height $h_E$ on $E(\ovl{K})$. The function $\log |x-\beta_v$ is replaced by $\lambda_v(x-\beta)$, where $\lambda_v$ is the Neron local height function.
\par 
It will be necessary to also consider analogous statements for non-archimedean places $v$ too. This requires us to work on the Berkovich analytification $\berkP$ and $E^{\an}_v$. For $\bb{G}_m$, it is known independently by the works of Baker--Rumely \cite{BR06}, Favre--Rivera-Letelier \cite{FRL06} and Chambert-Loir \cite{CL06} that our Galois orbits $F_n$ converge to a delta mass at the Gauss point $\delta_{\zeta(0,1)}$. We refer readers to \cite{BR10} for background on the Berkovich projective line. 
\par 
For elliptic curves, it was proven by Baker--Petsche \cite{BP05} that there exists a uniform probability measure $\mu_v$ on $E^{\an}_v$ such that the Galois orbits $F_n$ converge to the said measure. 
\par 
Although \cite{BIR08} provides quantitative estimates for the rate of convergence, it is important for Theorems \ref{IntroUniformIh3} and \ref{IntroUniformIh4} to understand how these estimates depend as the base field $K$ change, which are not covered by the results of \cite{BIR08}. To handle this issue for $\bb{G}_m$, we instead use Favre--Rivera-Letelier's quantitative equidistribution theorem which allows us to understand the rate of convergence even as the base field $K$ varies.
\par 
For elliptic curves, Baker--Petsche \cite{BP05} gives also a quantitative estimate on the convergence, but only in terms of an upper bound on a local discrepancy term $\Lambda_v(F)$. When $v$ is non-archimedean, Petsche \cite{Pet09} obtains an explicit estimate for the convergence in \eqref{eq: Equib1} in terms of the local discrepancy $\Lambda_v(F)$ (see Proposition \ref{QuantNonArchElliptic1}. For archimedean $v$, it is not quite clear how to directly obtain an explicit estimate from the local discrepancy. We instead follow the approach in \cite{FRL06} to obtain an explicit estimate from an upper bound of the local discrepancy term (see Proposition \ref{ArchQuant1}). 
\par 
To obtain results that are uniform in $\beta$, the main obstacle is to bound how close a torsion point $x$ can be to $\beta$ in terms of the size of its Galois orbit $F$ and the height $h(\beta)$. For uniformity, our approach requires that our bound is linear with respect to $h(\beta)$. Such a bound can be obtained using linear forms in logarithms. For $\bb{G}_m$, we use due to Laurent, Mignotte and Nesterenko \cite{LMN95} which give us the bound we want. For non-archimedean $v$, the bounds arising from $p$-adic linear forms in logarithms have a factor of $p^D$ where $D = [\bb{Q}(\beta):\bb{Q}]$, which gives us an exponential bound in $D$ instead of a polynomial one. To achieve a polynomial bound, we use a simple discreteness property that roots of unity satisfy in non-archimedean places, which leads us to allowing one exception for each finite place $v$. 
\par 
In the case of elliptic curves, there is a theory of linear forms in elliptic logarithms. Using results of David--Hirata-Kohno \cite{SH09}, it turns out that in general, the bounds obtained are not strong enough to prove a version of Theorem \ref{IntroUniformIh1} for elliptic curves as there is an extra factor of a power of $\log^+ h(\beta)$ that appears in our upper bound on $\lambda_v(x-\beta)$. This forces us to allow the possible existence of an exceptional Galois orbit $F$ for each place that could be $S$-integral relative to $\beta$ in order to strengthen our upper bound. For elliptic curves with complex multiplication, Ably and Gaudron \cite{AG03} have proven stronger bounds where there is no extra $\log^+ h(\beta)$ factor, which allows us to obtain Theorem \ref{IntroUniformIh2} for CM elliptic curves. The non-archimedean place is more technical compared to that of $z^d$ for places $v$ where the elliptic curve $E$ has bad reduction, but one can still show some form of discreteness and we are able to conclude our uniform bounds.
\par 
Let us mention that there are analogous results of \cite{BIR08} in the setting of Drinfeld modules \cite{Dra14}. We expect that the methods here, combined with the equidistribution theorem proven in \cite{Dra13}, would lead to uniform bounds too. We also would like to mention there is a general conjecture by Ih on the finiteness of $S$-integrality of preperiodic points. Given a rational map $\vphi: \bb{P}^1 \to \bb{P}^1$, we say that a point $x$ is preperiodic if $\vphi^m(x) = \vphi^n(x)$ for some distinct natural numbers $m,n$.
\begin{conjecture} \label{Ih1} (Ih's Conjecture) 
Let $\vphi: \bb{P}^1 \to \bb{P}^1$ be a rational map of degree $d \geq 2$ defined over a number field $K$ and $S$ a finite set of places of $K$. For any non-preperiodic point $\beta \in \bb{P}^1(K)$, there are only finitely many preperiodic points $x \in \bb{P}^1(\ovl{K})$ that are $S$-integral relative to $\beta$.
\end{conjecture}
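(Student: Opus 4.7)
The plan is to attack Conjecture \ref{Ih1} by generalizing the adelic-capacity argument of Baker--Ih--Rumely, using arithmetic equidistribution of preperiodic points together with potential theory on Berkovich spaces. Assume for contradiction that $\beta \in \bb{P}^1(K)$ is non-preperiodic and that $\{x_n\}_{n=1}^\infty \subset \Prep(\vphi)$ is an infinite sequence of pairwise distinct preperiodic points each $S$-integral relative to $\beta$. Since $\hat{h}_\vphi(x_n) = 0$ for every $n$, the arithmetic equidistribution theorem for small points in the dynamical setting (Baker--Rumely, Chambert-Loir, Favre--Rivera-Letelier, Yuan) guarantees, after passing to a subsequence, that for every place $v$ of $K$ the uniform measures on the Galois orbits $\Gal(\ovl{K}/K)\cdot x_n$ converge weakly on $\berkP$ to the canonical measure $\mu_{\vphi,v}$ attached to $\vphi$.

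Next I would translate the $S$-integrality hypothesis into an asymptotic potential-theoretic inequality. Combining $S$-integrality with the product formula yields
\begin{equation*}
    \sum_{v \in S} \frac{1}{|\Gal(\ovl{K}/K)\cdot x_n|} \sum_{y \in \Gal(\ovl{K}/K)\cdot x_n} \bigl(-\log |y - \beta|_v\bigr) \;\geq\; 0 \quad \text{for every } n.
\end{equation*}
Formally passing to the limit in $n$ converts this into a sum over $v \in S$ of integrals $\int_{\berkP} -\log|z-\beta|_v \, d\mu_{\vphi,v}(z)$. Rewriting each local term using the dynamical Arakelov--Green function $g_{\vphi,v}(z,\beta) = -\log|z-\beta|_v + (\text{continuous potential})$, and combining with the Call--Silverman decomposition of the canonical height, one obtains an adelic inequality whose violation is measured by $\hat{h}_\vphi(\beta)$; since $\beta \notin \Prep(\vphi)$ forces $\hat{h}_\vphi(\beta) > 0$ by Northcott's theorem, the inequality must eventually fail, producing the desired contradiction.

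The principal obstacle is that $-\log|\cdot - \beta|_v$ has a logarithmic singularity at $\beta$ and is therefore not a continuous test function admissible for weak equidistribution, so the limit step above cannot be carried out directly. To close the gap I would truncate, replacing $-\log|\cdot - \beta|_v$ by $\min(-\log|\cdot-\beta|_v, M)$, take $n \to \infty$ for each fixed $M$, and then let $M \to \infty$. The truncation error must be shown to vanish uniformly in $n$, which reduces to two ingredients: (i) ruling out atoms of $\mu_{\vphi,v}$ at $\beta$, which holds outside a finite list of exceptional dynamical configurations of $\vphi$, and (ii) a quantitative arithmetic equidistribution bound of Favre--Rivera-Letelier / Fili--Petsche type, controlling the number of conjugates of $x_n$ which can lie in a Berkovich ball of radius $\epsilon$ about $\beta$ at each place of $S$, with a discrepancy decaying faster than $|\log \epsilon|$. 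Supplying (ii) with the sharp uniformity required for arbitrary rational $\vphi$ is the crux of the argument and is where essentially all analytic work of the proof lies; for the specific canonical measures arising from power and Lattès maps, this discrepancy control is explicit and yields the results of the present paper directly.
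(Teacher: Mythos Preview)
The statement in question is labelled a \emph{conjecture} in the paper and is not proved there; the paper only establishes it (in fact, uniform versions of it) for power and Latt\`es maps. So there is no ``paper's own proof'' to compare against, and your task was really to assess whether the outlined strategy could settle the general conjecture.

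Your overall architecture---equidistribution of Galois orbits of preperiodic points toward $\mu_{\vphi,v}$, the Arakelov--Zhang pairing to convert the adelic sum into $\hat h_\vphi(\beta)>0$, and truncation of the singular test function $-\log|\cdot-\beta|_v$---is exactly the framework the paper uses in the special cases (see the Overview in \S1.2, Theorem~\ref{LogEquibSIntegral1}, Proposition~\ref{GeneralQuantLog1} and Lemma~\ref{GeneralQuantLog2}). Where your proposal falls short is in the description of ingredient~(ii). You phrase the missing step as a pure discrepancy estimate: control the number of conjugates of $x_n$ landing in a ball of radius $\epsilon$ about $\beta$, uniformly in $n$, with error $o(|\log\epsilon|^{-1})$. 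But quantitative equidistribution alone cannot deliver this. The Favre--Rivera-Letelier bound the paper records as Proposition~\ref{QuantDisc1} gives
\[
|F\cap D(\beta,\epsilon)| \;\le\; \frac{1}{\epsilon\,|F|^{1/\kappa-1}} \;+\; |F|\,\mu_{\vphi,v}(D(\beta,e\epsilon)) \;+\; C\sqrt{|F|\log|F|},
\]
and the first term blows up as $\epsilon\to 0$ for fixed $|F|$. Even a single conjugate lying extraordinarily close to $\beta$ makes the truncation error uncontrollable, no matter how refined the equidistribution statement is.

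What is genuinely needed---and what the paper isolates as the ``first step'' in \S1.2---is a \emph{diophantine} lower bound of the shape
\[
\max_{x\in F}\log|x-\beta|_v^{-1} \;<\; A\,(h(\beta)+1)\,|F|^{1/2-\delta}
\]
for some fixed $\delta>0$. This is an individual repulsion statement, not a statistical one, and for power and Latt\`es maps it is supplied by linear forms in (elliptic) logarithms (Corollaries~\ref{BakerCorollary1}, \ref{EllipticBaker2}, \ref{EllipticBakerCM2}). For a general rational map $\vphi$ no analogue of Baker's theorem is known, and this is precisely why Ih's conjecture remains open. Your proposal correctly flags (ii) as the crux, but by casting it as an equidistribution refinement rather than a transcendence/diophantine problem, it obscures where the real difficulty lies and why the argument does not presently go through.
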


Ih's conjecture has been proven for power and Lattès maps by Baker--Ih--Rumely \cite{BIR08}, and for Chebyshev Maps by Ih-Tucker \cite{IT10}. Petsche \cite{Pet08} has established the conjecture for any rational map $\vphi$ of degree $d \geq 2$, but under the additional assumption that $\beta$ is a totally Fatou point. Quantitative results in Petsche's setting have recently been obtained by Young \cite{You22}.
\par 
We would also like to mention a result of Ingram \cite{Ing09} regarding the number of $n$ such that $[n]P$ is an integral point where $P \in E(\bb{Q})$ is a non-torsion point. Ingram proves that for $n$ larger than some bound depending on the Tamagawa number on $E$, there is at most one possible value of $n$ such that $[n]P$ is integral, similar to our results in Theorem \ref{IntroUniformIh4}.

\subsection{Acknowledgements} The author would like to thank our advisor Laura DeMarco, along with Niki Myrto Mavraki and Yan Sheng Ang for helpful discussions about the paper and the problem. The author would like to thank Paul Fili, Dragos Ghioca and Marley Young for helpful comments. The author would also like to thank the referee for numerous comments and suggestions.

\section{Quantitative Equidistribution on $\bb{P}^1$}
We now recall the quantitative version of equidistribution as proven by Favre--Rivera-Letelier in \cite{FRL06}. Such quantitative equidistribution methods have been used in \cite{Fil17}, \cite{DKY20}, \cite{DKY21} and \cite{You22} to obtain uniform results. We first introduce the Berkovich projective line $\berkP$.

\subsection{The Berkovich Projective Line $\berkP$} Let $\bb{C}_v$ be an algebraically closed field that is complete with respect to a non-archimedean valuation $v$. When attempting to study dynamics of a rational map $\vphi: \bb{P}^1(\bb{C}_v) \to \bb{P}^1(\bb{C}_v)$, it turns out that it is more natural to study the situation over the Berkovich analytification $\vphi: \berkP \to \berkP$. Interested readers may consult \cite{Ben19} for a thorough treatment of the theory. 
\par 
Let $D(a,r) \subseteq \bb{A}^1(\bb{C}_v)$ be the open disc that is centered at $a$ with radius $r$. Each such disc corresponds to a point on $\berkP$. When $r = 0$, we can identify this with the usual point $a \in \bb{P}^1(\bb{C}_v)$. This are known as the Type I points, or also as the classical points. Let $\zeta(a,r)$ denote the point on $\berkP$ corresponding to the disc $D(a,r)$. When $r$ is an element of the value group $|\bb{C}_v|^{\times}$, we say that $\zeta(a,r)$ is a Type II point. If it is not, we say that $\zeta(a,r)$ is a Type III point. The remaining points of $\berkA$ are called Type IV points, and they correspond to a nested intersection of discs $\cdots \subseteq D_n \subseteq \cdots \subseteq D_1$ such that $\cap_{n=1}^{\infty} D_n = \emptyset$ but their radii do not go to zero. Finally, $\berkP$ consists of $\berkA$ along with an extra type I point called $\infty$. 
\par 
Given any element of $f(z) \in \bb{C}_v[z]$, the function $z \mapsto |f(z)|_v$ extends to a function on $\berkA \to \bb{R}_{\geq 0}$ and the topology on $\berkA$ is given the weakest topology such that all polynomials $f(z)$ are continuous on $\berkA$. This makes $\berkP$ into a compact and path-connected space where $\bb{P}^1(\bb{C}_v)$ sits inside as a dense subset. 
\par 
The Berkovich space allows one to develop a suitable analogue of the Laplacian $\Lap = \frac{1}{2\pi} (\frac{\partial^2}{\partial x^2} + \frac{\partial^2}{\partial y^2})$ for non-archimedean places. This is developed in full in Baker-Rumely \cite{BR10}. For a suitable class of continuous functions $f: \berkP \to \bb{R}$, its Laplacian $\Lap f$ is a signed Borel measure on $\berkP$ of total mass zero. Also, it is a self-adjoint operator in the sense that
$$\int f \Lap g = \int g \Lap f.$$
This fact will be repeatedly used later in computations. Here, we note that our Laplacian is normalized so that $\Lap \log |z| = \delta_{0} - \delta_{\infty}$. Using non-archimedean potential theory, it is then possible to construct an analogue of the equilibrium measure $\mu_{\vphi,v}$ for a given rational map $\vphi: \berkP \to \berkP$ of degree $d \geq 2$. For example, if $\vphi$ has good reduction, it is simply the delta mass at the Gauss point $\zeta(0,1)$.

\subsection{Quantitative Equidistribution following Favre--Rivera-Letelier }
Let $K$ be a number field, $K_v$ its completion for a place $v$ and $\bb{C}_v$ the completion of $\overline{K}_v$. Let $M_K$ denote the places of $v$. We first introduce some definitions from \cite{FRL06}. For each $v \in M_K$, let $\rho_v$ be a measure on $\berkP$.
\par 
We say that $\rho_v$ has continuous potentials if $\rho_v = \lambda_v + \Lap g$ for some continuous function $g: \berkP \to \bb{R}$ which is the difference of two continuous subharmonic functions. Here, $\lambda_v$ is the Dirac mass supported at the Gauss point $\delta_{\zeta(0,1)}$ for non-archimedean $v$ and is the uniform probability measure supported on the unit circle for archimedean $v$. If $d$ is a metric on $\bb{P}^1(\bb{C}_v)$, we say that $\rho_v$ has Hölder-continuous potentials with exponent $\kappa$ with respect to a metric $d$ if furthermore there exists a constant $C > 0$ such that
$$|g(z)-g(w)| \leq Cd(z,w)^{\kappa}$$
for all classical points $z,w \in \bb{P}^1(\bb{C}_v)$. 
\par 
A collection of measures $\rho = (\rho_v)_{v \in M_K}$ is said to be an adelic measure if each $\rho_v$ has continuous potentials and $\rho_v = \lambda_v$ for all but finitely many $v$. Given two measures $\rho_v,\rho_v'$ on $\berkP$, we define a bilinear form by
$$( \rho_v, \rho_v')_v = -\int_{\berkA \times \berkA \setminus \Diag} \log|z-w|_v d \rho_v(z) d \rho'_v(w)$$
where $\Diag = \{(z,z), z \in \bb{C}_v\}$ is the diagonal of classical points. This integral exists if both $\rho_v,\rho_v'$ either have continuous potentials or are discrete point masses. For a finite set of points $F \subset \overline{K}$ that is $\Gal(\overline{K}/K)$-invariant, we can then define its height with respect to $\rho$ as
$$h_\rho(F) = \frac{1}{2} \sum_{v \in M_K} (\!( [F]-\rho_v,[F]-\rho_v )\!)_v$$
where $[F] = \frac{1}{|F|} \sum_{x \in F} \delta_{x}$ and $(\!(\, \, , \, \,)\!) = N_v(\, \, , \, \,)$ with $N_v = [K_v:\bb{Q}_v]/[K:\bb{Q}]$. 
\par 
For an infinite place $v$, we say a continuous function $f: \bb{P}^1(\bb{C}_v)\to \bb{R}$ is of class $C^k_{\sph}$ if it is $C^k$ with respect to the spherical metric 
$$d_{\sph}(x,y)= \frac{|x_1 y_2 - x_2 y_1|_v}{\sqrt{|x_1|^2 + |y_1|^2} \sqrt{|y_1|^2 + |y_2|^2}}$$
where $x = [x_1:x_2]$ and $y = [y_1:y_2]$. For a finite place $v$, we say $f: \berkP \to \bb{R}$ is of class $C^k_{\sph}$ if it is locally constant outside of a finite subtree $T \subset \bb{H}_v$ and $T$ is a finite union of segments where $f$ is of the usual class $C^k$ on. Given $f$ of class $C^k_{\sph}$ for $k \geq 1$, we define 
$$\langle f , f \rangle_v =  \int_{\bb{C}}\left(\frac{\partial f}{\partial x} \right)^2 + \left(\frac{\partial f}{\partial y} \right)^2 dx dy$$
if $v$ is archimedean. If $v$ is non-archimedean, we fix a basepoint $S_0 \in \bb{H}_v$ and let $\partial f(S)$ be the derivative of $f$ restricted to the segment $[S_0,S]$. Then we define
$$ \langle f , f \rangle_v = \int_{\bbf{P}^1(\bb{C}_v)} (\partial f)^2 d \lambda.$$
We note that these definitions of energy still work if one assumes $f$ is merely continuous, if we use the weak derivative instead.
We can now state the quantitative equidistribution result of Favre--Rivera-Letelier. 

\begin{theorem}[Theorem 7, \cite{FRL06}]  \label{QuantEquib1} 
Let $\rho = \{\rho_v\}_{v \in M_K}$ be an adelic measure where each $\rho_v$  has Hölder-continuous potentials of exponent $\kappa \leq 1$ with respect to the spherical metric. Then there exists a constant $C > 0$, only depending on $\rho_v$'s and $K$, such that for all places $v$ and all functions $f$ of class $C^1_{\sph}$ on $\berkP$, and for all finite $\Gal(\overline{K}/K)$-invariant sets $F$, we have
$$\left| \frac{1}{|F|} \sum_{\alpha \in F} f(\alpha) - \int_{\berkP} f d\rho_v \right| \leq \frac{\Lip_{\sph}(f)}{|F|^{1/\kappa}} + \left(2 h_{\rho}(F) + C \frac{\log|F|}{|F|} \right)^{1/2} \langle f ,f \rangle_v^{1/2}.$$
Here, $\Lip_{\sph}$ is the Lipschitz constant for $f$ with respect to the spherical metric.
\end{theorem}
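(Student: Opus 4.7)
My plan is to combine a smoothing/regularization step with a Cauchy--Schwarz inequality for the energy pairing, which is the standard strategy for such quantitative equidistribution estimates. The first move is to regularize the point mass $[F]$ at a scale $\epsilon > 0$, obtaining a measure $[F]_\epsilon$ whose potential is continuous and whose self-energy is finite; a convenient choice is to convolve each classical point of $F$ with a ball of radius $\epsilon$ in the spherical metric (or its Berkovich analogue). One then has the Lipschitz bound
\[
\left|\int f\, d\bigl([F]-[F]_\epsilon\bigr)\right|\le \Lip_{\sph}(f)\cdot\epsilon,
\]
and the problem reduces to estimating $\int f\, d([F]_\epsilon-\rho_v)$.

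For the smoothed piece I would apply an energy Cauchy--Schwarz inequality of the form
\[
\left|\int f\, d([F]_\epsilon - \rho_v)\right|^2 \le \langle f,f\rangle_v \cdot \bigl([F]_\epsilon-\rho_v,\,[F]_\epsilon-\rho_v\bigr)_v,
\]
which follows from self-adjointness and positivity of the Laplacian on $\berkP$ as developed in \cite{BR10}. Equivalently, writing $\Lap g_\epsilon = [F]_\epsilon - \rho_v$ for a continuous potential $g_\epsilon$, integration by parts recasts the pairing as a Dirichlet inner product and the usual Cauchy--Schwarz applies. This reduces the task to bounding the regularized local energy $([F]_\epsilon-\rho_v,[F]_\epsilon-\rho_v)_v$ in terms of the global height $h_\rho(F)$.

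The comparison between local regularized energy and global height is the technical heart. Summing the weighted local bilinear forms $(\!( [F]-\rho_w,[F]-\rho_w )\!)_w$ over all places $w$ gives $2h_\rho(F)$, so positivity of the pairing at every place other than $v$ shows that the single-place contribution at $v$ is bounded above by $2h_\rho(F)$ plus the cost of replacing $[F]$ by $[F]_\epsilon$. This cost splits into two pieces: the self-energy of the smoothed point masses along the diagonal, which grows like $O(\log(1/\epsilon)/|F|)$, and the Hölder error from displacing each potential of $\rho_w$ by distance $\epsilon$, which is $O(\epsilon^\kappa)$ uniformly over the finitely many places where $\rho_w \neq \lambda_w$. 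Choosing $\epsilon = |F|^{-1/\kappa}$ balances the Lipschitz error from the first step against the diagonal regularization error, and produces exactly the two advertised terms $\Lip_{\sph}(f)/|F|^{1/\kappa}$ and $(2h_\rho(F)+C\log|F|/|F|)^{1/2}\langle f,f\rangle_v^{1/2}$.

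The main obstacle is this third step: quantifying the diagonal self-energy of $[F]_\epsilon$ sharply enough to produce the $\log|F|/|F|$ term after optimization, while simultaneously keeping the Hölder error uniform across all places so that the constant $C$ depends only on $\rho$ and not on $F$ or $v$. The hypothesis $\kappa \le 1$ enters precisely in balancing the $\epsilon^\kappa$ Hölder error against the $\epsilon$ Lipschitz error at the chosen scale, which is the source of the exponent $1/\kappa$ in the final bound.
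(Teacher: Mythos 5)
This statement is quoted directly from Favre--Rivera-Letelier (Theorem 7 of \cite{FRL06}) and is not proved in the paper; the only place its argument appears is in the proof of Proposition \ref{QuantEquib2}, which reuses exactly the skeleton you describe. Your outline (regularize $[F]$ at scale $\epsilon$, absorb the discrepancy via the Lipschitz constant, apply Cauchy--Schwarz for the energy pairing, compare the regularized local energy at $v$ with $2h_\rho(F)$ using positivity at the other places plus the diagonal term $O(\log(1/\epsilon)/|F|)$ and the Hölder error $O(\epsilon^\kappa)$, then set $\epsilon=|F|^{-1/\kappa}$) is a faithful reproduction of that argument, so your approach is correct and essentially the same as the source the paper relies on.
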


It will be important for us to find out the dependency of $C$ on $K$, while the $\rho_v$'s remains fixed. This happens for example when $\vphi: \bb{P}^1 \to \bb{P}^1$ is a rational map defined over a number field $K$ and we view it as a map over an extension $L$. We would also like to work wit the standard Euclidean metric, rather than the spherical metric. As such, we will follow the proof in \cite{FRL06} to obtain the following modified upper bound. We recall the regularization of measures given in \cite{FRL06} as follows. For an archimedean place $v$, we will let $[F]_{\eps}$ denote the measure $\frac{1}{|F|} \sum_{z \in F} \delta_{\zeta(z,\eps)}$ where $\delta_{\zeta(a,r)}$ denotes the uniform probability measure on the disc of radius $r$ centered at $a$. For a non-archimedean place, we will also let $[F]_{\eps}$ denote $\frac{1}{|F|} \sum_{z \in F} \delta_{\zeta(z,\eps)}$ where now $\delta_{\zeta(z,\eps)}$ denotes a delta mass at the Type II point represented by the disc of radius $\eps$ and centered at $z$. 

\begin{proposition} \label{QuantEquib3}
Let $\rho = \{\rho_v\}_{v \in M_K}$ be an adelic measure where each $\rho_v$  has Hölder-continuous potentials of exponent $\kappa \leq 1$ with respect to the spherical metric. Fix any $\delta > 0$. Then there exists a constant $C > 0$, only depending on $\rho_v$'s and $\delta$, such that for all places $v$ and all functions $f$ of class $C^1$ on $\berkP$, and for all finite $\Gal(\overline{K}/K)$-invariant sets $F$ not containing the point $\{\infty\}$, we have
$$\left| \frac{1}{|F|} \sum_{\alpha \in F} f(\alpha) - \int_{\berkP} f d\rho_v \right| \leq \frac{\Lip(f)}{|F|^{\delta/\kappa}} + \left(2 h_{\rho}(F) + C[K:\bb{Q}] \frac{\log|F|}{|F|} \right)^{1/2} \langle f ,f \rangle_v^{1/2}.$$
\end{proposition}

\begin{proof}
By \cite[Proposition 2.8]{FRL06} and \cite[Proposition 4.9]{FRL06}, there exists some constant $C' > 0$ independent of $\rho_v$ such that 
$$([F]- \rho_v, [F]-\rho_v) \geq ([F]_{\eps} - \rho_v, [F]_{\eps} - \rho_v) - 2\eta(\eps) - |F|^{-1}(C + \log \eps^{-1})$$
$$\geq 2 \eta(\eps) - |F|^{-1}(C + \log \eps^{-1}).$$
Here, we may take $\eta(\eps)$ to be a modulus of continuity for the spherical metric as the Euclidean metric is at most the spherical metric. Now if $S$ denotes the places of $M_K$ for which either $v$ is archimedean or $\rho_v$ is not the delta mass at the Gauss point, then for $v \not \in S$ we have 
$$([F]-\rho_v, [F]-\rho_v) \geq 0$$
without regularizing. Hence given a place $v_0$, we obtain
$$(([F]-\rho_{v_0}, [F]-\rho_{v_0})) \leq 2 h_{\rho}([F]) + |S|(2 \eta(\eps) + |F|^{-1}(C' + \log \eps^{-1}).$$
As $\eta(\eps)$ may be taken as $O(\eps^{\kappa})$, we set $\eps$ such that $\eps = |F|^{-\delta/\kappa}$. Hence for some other constant $C''$, now depending on the measures $\rho_v$, we obtain 
$$(([F]_{\eps}- \rho_{v_0}, [F]_{\eps}- \rho_{v_0})) \leq h_{\rho}([F]) + |S| C'' \frac{\log |F|}{|F|}.$$
Now given a test function $f$, we may apply Cauchy Schwarz on our positive definite form $(( \, , \ , ))$ to get
$$(\Lap f, [F]_{\eps} - \rho_{v_0}) \leq (\Lap f, \Lap f)^{1/2} \left( 2 h_{\rho}(F) + |S| C'' \frac{\log |F|}{|F|} \right)^{1/2}.$$
Since $(\Lap f, [F]_{\eps} - \rho_{v_0}) = \int f d([F]_{\eps} - \rho_{v_0})$, we obtain the same bound for $\int f d([F] - \rho_{v_0})$ after adding in an error term of $\eps \Lip(f) = \frac{\Lip(f)}{|F|^{\delta/\kappa}}$ as desired. Finally, if we vary our base field $K$, the constant $|S|$ changes by a factor of $[K:\bb{Q}]$. Hence we may replace $|S| C''$ by $C [K:\bb{Q}]$ as desired. 
\end{proof}

In the archimedean case, we wish to apply the theorem to functions that are not necessarily differentiable. Thus we have to extend the theorem to a larger class of test functions, which we do so by a continuity argument. For archimedean $v$, we consider Lipschitz continuous functions $f$ such that $\Lap f$ is a finite signed measure. We will let $\cal{F}$ denote the set of such test functions.

\begin{proposition} \label{QuantEquib2}
For archimedean places $v$, Theorem \ref{QuantEquib1} also holds for functions $f \in \cal{F}$. 
\end{proposition}

\begin{proof}
Let $M$ be an upper bound for $|z|$ for $z \in F$. By convolving with a smooth function $\chi_{\eps}$ supported on a disc of radius $\eps$, we obtain a sequence of smooth functions $f_{\eps}$ such that $f_{\eps} \to f$ uniformly on $\{|z| \leq M\}$. We may then apply Theorem \ref{QuantEquib1} to each such $f_{\eps}$. It then suffices to show that $\limsup \langle f_{\eps}, f_{\eps} \rangle \leq \langle f,f \rangle$. But as $\Lap f_{\eps}$ is equal to convolving $\Lap f$ over a disc of radius $\eps$, we obtain 
$$\left| \int g (\Lap f - \Lap f_{\eps}) \right| \leq \Lip(g) \eps$$
for any Lipschitz continuous function $g$. Then as $f$ is Lipschitz continuous with say constant $C$, $f_{\eps}$ is also Lipschitz continuous with constant $C$. Furthermore, we also have 
$$\int g \Lap f \leq (\sup g) \cdot |\Lap f|(\bb{P}^1(\bb{C}))$$
where $|\Lap f|$ is the trace measure. As $\Lap f$ is a finite signed measure, we know that $|\Lap f| \leq C'$ for some constant $C' > 0$. 
It follows that 
$$\langle f_{\eps}, f_{\eps} \rangle = \int f_{\eps} \Lap f_{\eps} \leq C \eps + \int f_{\eps} \Lap f \leq C \eps + C' \eps + \int f \Lap f =  (C+C') \eps + \langle \Lap f , \Lap f \rangle.$$
Taking $\eps \to 0$ gives us what we want. 
\end{proof}

\section{Quantitative Logarithmic Equidistribution for $\bb{G}_m$}
\subsection{Bounding distances to torsion points for $\bb{G}_m$}
The main tool that we will be needing is linear forms in logarithms. This was first developed by Baker (see \cite{Bak75}) that allowed one to give lower bounds to quantities of the form $|a_1^{b_1} \cdots a_n^{b_n} - 1|$ in terms of the heights of $a_i$ and $b_i$. 
\par 
The first result that we need is a theorem of Laurent, Mignotte and Nesterenko.

\begin{theorem} \label{Baker} (Theorem 3 \cite{LMN95})
Let $\alpha$ be an algebraic number with $|\alpha| = 1$ which is not a root of unity and let $b_1,b_2$ be positive integers. Define $\Lambda = b_1 i \pi - b_2 \log \alpha$. Let
$$D = [\bb{Q}(\alpha):\bb{Q}]/2, \quad a = \max \{20, 10.98|\log \alpha| + D h(\alpha) \},$$
$$H = \max\left \{17, \frac{\sqrt{D}}{10}, D \log(\frac{b_1}{2a} + \frac{b_2}{68.9}) + 2.35D + 5.03 \right\}.$$
Then
$$ \log |\Lambda| \geq -8.87aH^2.$$
\end{theorem}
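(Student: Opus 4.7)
The statement in question is a classical result from transcendence theory that the paper invokes as a black box from \cite{LMN95}; accordingly, any proposal here has to reconstruct, in broad strokes, the Baker–type method that gives effective lower bounds on linear forms in two logarithms with sharp numerical constants. The plan is to follow the interpolation–determinant strategy initiated by Laurent, which avoids the auxiliary polynomial construction of Baker's original proof and gives better numerical values. The overall scheme is a contradiction: assume $|\Lambda|$ is smaller than the bound claimed, build a nonzero analytic/arithmetic quantity whose modulus can be controlled from above using the smallness of $\Lambda$ and from below using Liouville-type inequalities, and show these bounds are incompatible unless $\alpha$ is a root of unity.

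Concretely, I would first rewrite $\Lambda = b_1 i\pi - b_2 \log \alpha$ as $e^{b_2 \log \alpha - b_1 i \pi} - 1$ up to an explicit factor, so that the hypothesis $|\Lambda|$ small translates to $\alpha^{b_2} \approx (-1)^{b_1}$. Next, choose integer parameters $L_1, L_2, T$ (to be optimized) and form the $L_1 L_2 \times L_1 L_2$ interpolation determinant
\[
\Delta = \det\!\bigl( (s \cdot i\pi + t \log \alpha)^{\tau} \alpha^{s b_2 - t b_1} \bigr),
\]
where the rows are indexed by pairs $(s,t)$ in a box and the columns by $(\tau,\sigma)$ with $\tau < T$ and $\sigma$ in a smaller range. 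The archimedean analytic estimate comes from Schwarz's lemma applied to the entire function $f(z) = e^{z \log \alpha} - e^{z i \pi b_1/b_2}$ (or a suitable two–variable variant), using that $\Lambda$ being tiny forces many near–cancellations among the matrix entries; this yields $\log |\Delta| \leq -c \cdot T L_1 L_2 + \text{(height correction)}$.

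For the lower bound, since each entry of $\Delta$ is an algebraic number in $\mathbb{Q}(\alpha)$ of controlled height and $\Delta \neq 0$ (this is the key zero–estimate / non–vanishing lemma, using that $\alpha$ is not a root of unity so that $\alpha^{s b_2 - t b_1}$ are distinct for distinct $(s,t)$), the product formula gives $\log |\Delta| \geq - D \cdot \text{(total height of the determinant)}$. Combining these two inequalities yields an inequality in the parameters $L_1, L_2, T, a, H$ which becomes vacuous unless $\log |\Lambda| \geq -C a H^2$ for the explicit constant $C = 8.87$. The numerical value of $C$ comes from an optimization over admissible $(L_1, L_2, T)$ against the fixed quantities $a$ (a height–and–argument term for $\alpha$) and $H$ (controlling $\log \max(b_1, b_2)$).

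The hard part is entirely quantitative: the existence of \emph{some} effective constant $C$ is fairly standard Baker theory, but obtaining $8.87$ requires (i) an extremely careful choice of the parameters $L_1, L_2, T$, (ii) using the optimized Schwarz lemma on small disks rather than on a single large one (the "Feldman trick" / Laurent's refinement), and (iii) sharp arithmetic estimates for sizes of determinants of algebraic numbers, including Liouville-type control over the archimedean sizes of conjugates of $\alpha$ (where the factor $10.98 |\log \alpha|$ in $a$ enters). Since the paper uses the theorem as a black box, I would not attempt to reproduce these optimizations, but merely indicate that the entire argument is a Baker–type contradiction with interpolation determinants replacing auxiliary polynomials.
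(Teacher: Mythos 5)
The paper does not prove this statement at all: it is quoted verbatim as Theorem 3 of \cite{LMN95} and used as a black box, so there is no internal proof to compare yours against. Your outline correctly identifies the method that Laurent, Mignotte and Nesterenko actually use --- Laurent's interpolation-determinant variant of Baker's method, with a Schwarz-lemma upper bound on the determinant, a Liouville-type lower bound via the product formula, and a non-vanishing argument resting on $\alpha$ not being a root of unity --- so as a description of where the theorem comes from it is accurate and appropriately honest about what it omits.

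That said, judged as a proof rather than as a citation with commentary, your proposal has a genuine gap, and you acknowledge it yourself: everything that makes the statement \emph{this} statement (the constant $8.87$, the precise definitions of $a$ and $H$ with the coefficients $10.98$, $2.35$, $5.03$, $68.9$, and the thresholds $20$ and $17$) lives entirely in the quantitative optimization you decline to carry out. The qualitative skeleton you give would establish only that \emph{some} effective bound of the shape $\log|\Lambda| \geq -C\,a H^2$ holds, which is weaker than the quoted theorem and, in principle, is all the paper needs for Corollary \ref{BakerCorollary1} (where the explicit constants are absorbed into $C_\epsilon$ anyway). One smaller point worth flagging: the non-vanishing of the interpolation determinant is not automatic from the $\alpha^{sb_2 - tb_1}$ being distinct; the actual argument in \cite{LMN95} requires a genuine zero estimate for the family of functions $z^{\tau} e^{\sigma z}$ evaluated at the points $s\,i\pi + t\log\alpha$, and this is one of the places where the hypothesis that $\alpha$ is not a root of unity is used in an essential and non-trivial way. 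Since the paper itself treats the theorem as external input, the right resolution here is simply to cite \cite{LMN95} rather than to attempt a proof.
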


\begin{corollary} \label{BakerCorollary1}
Let $\beta$ be an algebraic number with $|\beta| = 1$. Then for any $\epsilon > 0$, there exists a constant $C_{\epsilon} > 0$ such that for any root of unity $\zeta$ of order $n$, we have
$$\log |\beta - \zeta| \geq -C_{\epsilon} [\bb{Q}(\beta):\bb{Q}]^3 (h(\beta)+1) n^{\epsilon}.$$
\end{corollary}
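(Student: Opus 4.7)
\emph{Proof sketch.}
The plan is to convert $|\beta - \zeta|$ into a Baker-type two-logarithm linear form and then apply Theorem \ref{Baker} directly. Since the statement would otherwise be vacuous, assume $\beta \notin \mu_\infty$. Write $\beta = e^{i\phi}$ with $\phi \in (-\pi, \pi]$; replacing $(\beta, \zeta)$ by $(\bar\beta, \bar\zeta)$ if necessary — a substitution that preserves $[\mathbb{Q}(\beta):\mathbb{Q}]$, $h(\beta)$, and $|\beta - \zeta|$ — one may assume $\phi \in (0, \pi)$. For $\zeta$ of order $n \geq 2$, write $\zeta = e^{2\pi i k/n}$ with $k \in \{1, \ldots, n-1\}$, and by shifting $k$ by a multiple of $n$ (which does not change $\zeta$) arrange a representative $k'$ with $0 < k' \leq n$ and $|2k'\pi - n\phi| \leq n\pi$. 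The degenerate case $n = 1$, i.e. $\zeta = 1$, is handled separately by a standard Liouville-style inequality $\log|\beta - 1| \geq -CD^2 h(\beta) - C$, which fits the right-hand side when $n = 1$.

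Applying Theorem \ref{Baker} with $\alpha = \beta$, $b_1 = 2k'$, $b_2 = n$ and using the principal branch $\log\beta = i\phi$ gives
$$\Lambda = 2k' i \pi - n \log\beta = i(2k'\pi - n\phi), \qquad |\Lambda| = n|\theta|,$$
where $\theta := \phi - 2\pi k'/n \in [-\pi, \pi]$. The elementary bound $|e^{i\theta} - 1| = 2|\sin(\theta/2)| \geq (2/\pi)|\theta|$ valid for $|\theta| \leq \pi$ then produces
$$\log|\beta - \zeta| \geq \log|\Lambda| - \log n - \log(\pi/2).$$

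It remains to estimate the parameters of Theorem \ref{Baker}. With $D = [\mathbb{Q}(\beta):\mathbb{Q}]/2$ and $|\log\beta| = \phi \leq \pi$, one obtains $a \leq C_1(1 + Dh(\beta))$. Since $b_1 \leq 2n$ and $b_2 = n$, the argument of the inner logarithm defining $H$ is $O(n)$, so $H \leq C_2 D(1 + \log n)$. Combining,
$$aH^2 \leq C_3 D^3 (h(\beta)+1)(1 + \log n)^2 \leq C_\epsilon D^3(h(\beta)+1)\, n^\epsilon$$
for any $\epsilon > 0$, using $(1 + \log n)^2 = O_\epsilon(n^\epsilon)$. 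Theorem \ref{Baker} then gives $\log|\Lambda| \geq -8.87\, aH^2 \geq -C_\epsilon D^3(h(\beta)+1)\, n^\epsilon$, and combining with the preceding display and absorbing the residual $\log n$ into $n^\epsilon$ (together with $D^3 \leq [\mathbb{Q}(\beta):\mathbb{Q}]^3$) completes the proof. The main obstacle is purely bookkeeping: arranging the positivity of $b_1$ and $b_2$, isolating the $n = 1$ edge case, and cleanly absorbing the $(\log n)^2$ factor from $H^2$ into $n^\epsilon$. The substantive content is a single direct invocation of Theorem \ref{Baker}.
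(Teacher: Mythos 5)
Your argument is essentially the paper's own proof: a single application of Theorem \ref{Baker} to the two-term linear form $2k\,i\pi - n\log\beta$, with $a = O(Dh(\beta)+1)$ and $H = O(D\log n)$, followed by absorbing the resulting $(\log n)^2$ (plus the $\log n$ lost in clearing denominators) into $n^{\epsilon}$ — and your bookkeeping of the $n=1$ case and of the passage from $|\Lambda|$ to $|\beta-\zeta|$ is in fact more careful than the paper's. The only slip is that a representative $k'$ satisfying both $0 < k' \le n$ and $|2k'\pi - n\phi| \le n\pi$ need not exist (e.g.\ $\phi$ small and $k/n$ near $1$ forces $k' = k-n < 0$), but this is repaired by applying Theorem \ref{Baker} to $\overline{\beta}$ with $b_1 = -2k' > 0$ in that case, i.e.\ exactly the conjugation device you already invoke.
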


\begin{proof}
Since $\beta$ and $\zeta$ are both on the unit circle, it suffices to bound how close their angles are. Thus we wish to bound the quantity $\log |\frac{k}{n} \pi i  - \log \beta|$. Since Theorem \ref{Baker} requires $b_1,b_2$ to be integers, we will instead bound $\log |k \pi i - n \log \beta|$ which incurs at most an extra $\log n$. As we may take $|\log \beta|$ to be less than $2 \pi$, and so we can take $a$ to be $ O([\bb{Q}(\beta):\bb{Q})] (h(\beta)+1))$. For $H$, as $b_1, b_2 = O(n)$, we can take $H$ to be $O(D \log n)$. Thus we can find a constant $C > 0$ such that
$$\log | k \pi i - n \log \beta| \geq -C [\bb{Q}(\beta):\bb{Q}]^3 (h(\beta)+1) (\log n)^2.$$
Since $\log n << n^{\epsilon}$ for all large enough $n$, we obtain that 
$$\log \left| \frac{k}{n} \pi i - \log \beta \right| \geq - C_{\epsilon} [\bb{Q}(\beta):\bb{Q}]^3 (h(\beta)+1) n^{\epsilon}$$
as desired.
\end{proof}

\begin{proposition} \label{PowerBound1}
Let $\vphi(z) = z^d$. For any $\epsilon > 0$, there exists a constant $C_{\epsilon}$ such that the following holds: Let $K$ be a number field and $\beta \in \bb{P}^1(K)$  be a non-preperiodic point. Let $v$ be an archimedean place of $K$. Then for any $\Gal(\ovl{\bb{Q}}/\bb{Q})$-orbit $|F|$ of preperiodic points, we have
$$\max_{x \in F} \log|x-\beta|^{-1}_v < C_{\epsilon}[K:\bb{Q}]^3(h(\beta)+1) |F|^{\epsilon}.$$
\end{proposition}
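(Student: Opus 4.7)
The preperiodic points of $\vphi(z)=z^d$ are $\{0\}\cup\mu_\infty$. The case $0\in F$ is immediate from the product formula, which gives $\log|\beta|_v^{-1}\le [K:\bb{Q}]\,h(\beta)$. So assume $F$ is the Galois orbit of a primitive $n$-th root of unity, whence $|F|=\varphi(n)\ge n^{1-o(1)}$ and powers of $n$ and of $|F|$ are interchangeable up to a slight worsening of $\epsilon$. Since $|\beta-\zeta|_v\ge \bigl|\,|\beta|_v-1\bigr|$ for any root of unity $\zeta$, the bound is trivial unless $r:=|\beta|_v\in[1/2,2]$, which is henceforth assumed.

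The core step is a reduction to Corollary \ref{BakerCorollary1}, which only applies to algebraic numbers on the unit circle. To achieve this, introduce
\[
\alpha \;:=\; \beta/\bar\beta_v,
\]
where $\bar\beta_v$ denotes the complex conjugate of $\beta$ under the embedding corresponding to $v$. Because $\bar\beta_v$ is a $\Gal(\ovl{\bb{Q}}/\bb{Q})$-conjugate of $\beta$, one has $\alpha\in\bb{Q}(\beta,\bar\beta_v)$, so $[\bb{Q}(\alpha):\bb{Q}]\le[K:\bb{Q}]^2$, $h(\alpha)\le 2h(\beta)$, and $|\alpha|_v=1$ by construction. Writing $\beta=re^{i\theta}$ and $\zeta=e^{i\psi}$ at $v$, direct trigonometric identities give
\[
|\alpha-\zeta^2|_v = 2|\sin(\theta-\psi)|,\qquad |\beta-\zeta|_v^2 = (r-1)^2 + 4r\sin^2\bigl((\theta-\psi)/2\bigr),
\]
which combine, in the regime $r\in[1/2,2]$ and $|\theta-\psi|\le \pi/2$ (true for the closest $\zeta\in F$), to yield $|\beta-\zeta|_v \gtrsim |\alpha-\zeta^2|_v$, so $\log|\beta-\zeta|_v^{-1} \le \log|\alpha-\zeta^2|_v^{-1} + O(1)$.

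Now apply Corollary \ref{BakerCorollary1} to the algebraic number $\alpha$ on the unit circle (which is not a root of unity, as $\beta$ is non-preperiodic) together with the root of unity $\zeta^2$, whose order divides $n$. Transporting the resulting bound back via the comparison above and replacing $n^\epsilon$ by $|F|^\epsilon$ gives an estimate of the stated shape for the $x\in F$ closest to $\beta$; since $F$ is a single Galois orbit, the maximum of $\log|x-\beta|_v^{-1}$ over $x\in F$ is attained (up to Galois translation) at that closest element, completing the argument.

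The main obstacle is securing the exponent $3$ on $[K:\bb{Q}]$. The naive bound $[\bb{Q}(\alpha):\bb{Q}]\le[K:\bb{Q}]^2$ fed through the cubic dependence in Corollary \ref{BakerCorollary1} yields only $[K:\bb{Q}]^6$. Sharpening this to $[K:\bb{Q}]^3$ requires either a finer Galois-theoretic analysis controlling $\bb{Q}(\alpha)$ inside the compositum $K\cdot K^c$ (using that $\bar\beta_v$ is a conjugate of $\beta$ and that $\alpha$ is a ratio rather than an arbitrary polynomial expression), or a direct two-logarithm linear-forms-in-logarithms estimate applied to the form $\Lambda = n\log\beta - 2\pi i k$ that does not require $\beta$ to lie on the unit circle; in the regime $r\in[1/2,2]$ the real part $n\log|\beta|_v$ of $\Lambda$ is harmless.
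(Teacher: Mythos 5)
There are two genuine gaps in your reduction. First, the claim that $\alpha=\beta/\bar\beta_v$ is not a root of unity ``as $\beta$ is non-preperiodic'' is false: any real non-preperiodic $\beta$ gives $\alpha=1$, and $\beta=1+i$ gives $\alpha=i$. Whenever $\arg\beta$ is a rational multiple of $\pi$, your auxiliary number $\alpha$ is a root of unity, Corollary \ref{BakerCorollary1} does not apply to it, and the comparison $|\beta-\zeta|_v\gtrsim|\alpha-\zeta^2|_v$ is vacuous for the closest $\zeta$ (the right-hand side can be $0$ while $|\beta-\zeta|_v=|r-1|$). In exactly this degenerate case you are forced back onto a Liouville-type lower bound for $\bigl|\,|\beta|_v-1\bigr|$, which is in fact all the paper uses for every off-circle $\beta$: $|\beta|_v^2=\beta\bar\beta_v$ is algebraic of degree $O([K:\bb{Q}]^2)$ --- the paper works with degree at most $2[K:\bb{Q}]$ --- and bounded height, so $\log\bigl|\,|\beta|_v-1\bigr|^{-1}$ is at most linear in the degree times $h(\beta)+1$, with no $|F|^\epsilon$ needed. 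The paper's case split (off the unit circle: Liouville; on the unit circle: Corollary \ref{BakerCorollary1} applied directly to $\beta$, which then genuinely is not a root of unity since a preperiodic $\beta$ is excluded) avoids your auxiliary $\alpha$ entirely.

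Second, you concede that your route as written only yields $[K:\bb{Q}]^6$ and offer two possible repairs without carrying either out. Neither is routine: there is no general reason for $\bb{Q}(\beta/\bar\beta_v)$ to have degree $O([K:\bb{Q}])$ inside the compositum, and a two-logarithm bound for $\Lambda=n\log\beta-2\pi ik$ with $|\beta|_v\neq1$ is outside the scope of Theorem \ref{Baker} as stated (which requires $|\alpha|=1$), so it would need a different input than the paper provides. Since the exponent $3$ is part of the statement and feeds into the $[K:\bb{Q}]^{10}$ in Theorem \ref{IntroUniformIh3}, this is not a cosmetic loss. The fix is simply to abandon the projection $\beta\mapsto\beta/\bar\beta_v$: handle $|\beta|_v\neq1$ by Liouville and $|\beta|_v=1$ by Corollary \ref{BakerCorollary1} applied to $\beta$ itself.
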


\begin{proof}
We first handle the case where $\beta$ does not lie on the unit circle. Then the distance to any root of unity can be bounded from below by $|\beta|_v - 1$. Now $|\beta|_v$ is a real algebraic number living in some field $K'$ of degree at most $2$ larger than $K$. Since $\beta \ovl{\beta} = |\beta|_v$ where $\ovl{\beta}$ is the complex conjugate, the height of $|\beta|_v$ is at most twice of that of $\beta$. By Proposition 5(b) of \cite{HS11}, we have $h(|\beta|_v - 1) \leq h(|\beta|_v) + \log 2$ and so $h(|\beta|_v - 1) \leq 2h(\beta) + \log 2$. Hence we conclude that
$$ \log | |\beta|_v - 1|^{-1} \leq [K':\bb{Q}](2h(\beta) + \log 2)$$ 
which gives us the constant $C$ we need.
\par 
Now let's us assume that $\beta$ lies on the unit circle and let $\zeta$ be a primitive $n^{th}$ root of unity. Applying Corollary \ref{BakerCorollary1}, as the Galois orbit $F$ of $\zeta$ has cardinality $\vphi(n) \geq \sqrt{n}$, we obtain a constant $C_{\epsilon} > 0$
$$\max_{x \in F} \log|x-\beta|^{-1}_v < C_{\epsilon}[K(\beta):K]^3 (h(\beta)+1)|F|^{\epsilon}$$
for any $\epsilon > 0$ as desired.
\end{proof}

\begin{proposition} \label{PowerBound2}
Fix a non-archimedean place $v$ of $\bb{Q}$ corresponding to the prime $p$ along with an extension to $\ovl{\bb{Q}}$. Let $D$ be a positive integer and let $\delta > 0$. Then there exists constants $C > 0$, depending on $p,\delta,D$, such that  for any $\beta \in \bb{P}^1(K)$ with $[K:\bb{Q}] < D$ and root of unity $\zeta$, we have
$$\log|\zeta-\beta|_v^{-1} < \delta$$
if $\deg_{\bb{Q}}(\zeta) > C$.
\end{proposition}

\begin{proof}
As each root of unity lies in a different residue class unless their order differs by a power of $p$, for $\beta \in K$ we have $\lambda_v(\beta,\zeta) = 0$ unless $\ord(\zeta) = p^k a_i$ for some finite set $\{a_1,\ldots,a_n\}$ of integers coprime to $p$. We can choose the $a_i$'s such that it works for any field extension $K$ of degree $D$, as the inertia degree of such a field is at most $D$.
\par 
Let $\zeta_m$ denote a primitive $m^{th}$ root of unity. Let $|\zeta_{p^k} - 1|_v = c$ . If $\ord(\zeta) = p^k a_i$, we may write $\zeta = \zeta_{p^k}^a \zeta_{a_i}^b$ for some natural numbers $a,b$ and thus conclude that $1 > |\zeta - \zeta_{a_i}^b| = c$. Hence 
$$c > |\beta - \zeta|_v \implies |\beta - \zeta_{a_i}^b|_v = c.$$
Due to ramification, we have $c = 1/p^{k-1}(p-1)$ and since we have only finitely many $\zeta_{a_i}$'s, the element $\beta - \zeta_{a_i}^j$ all live in some number field of fixed degree and thus it is impossible for our distance to be $c$ if $k$ is large enough. Hence if $k$ is large enough, $|\beta - \zeta|_v \geq c$ and so
$$\log |\beta - \zeta|_v^{-1} < \frac{1}{p^{k-1}(p-1)} \log p < \delta.$$
Hence if the order of $\zeta$ is larger than some constant $C$, we have $\log |\zeta - \beta|^{-1}_v < \delta$ as desired.
\end{proof}

As a simple corollary of the proof, we deduce the following too.

\begin{corollary} \label{PowerBound3}
Let $\vphi(z) = z^d$ and fix a non-archimedean place $v$ of $\bb{Q}$ corresponding to the prime $p$. Then for any $\beta \in \bb{P}^1(\ovl{\bb{Q}})$, there do not exist two distinct roots of unity $\zeta_1,\zeta_2$ such that
$$\log |\zeta_i - \beta|^{-1}_v > \frac{1}{p-1} \log p.$$
\end{corollary}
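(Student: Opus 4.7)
The plan is to argue by contradiction using the ultrametric inequality together with a direct computation of the minimum $v$-adic distance between two distinct roots of unity. Suppose $\zeta_1 \neq \zeta_2$ are roots of unity both satisfying $\log|\zeta_i - \beta|_v^{-1} > \frac{1}{p-1}\log p$, i.e.\ $|\zeta_i - \beta|_v < p^{-1/(p-1)}$. By the ultrametric inequality,
\[
|\zeta_1 - \zeta_2|_v \leq \max\bigl(|\zeta_1 - \beta|_v,\, |\zeta_2 - \beta|_v\bigr) < p^{-1/(p-1)}.
\]
It therefore suffices to prove that for any two distinct roots of unity $\zeta_1,\zeta_2$, one has $|\zeta_1 - \zeta_2|_v \geq p^{-1/(p-1)}$, which then immediately yields the contradiction.

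To reduce this to a statement about a single root of unity, observe that $|\zeta_2|_v = 1$, so $|\zeta_1 - \zeta_2|_v = |\eta - 1|_v$ where $\eta = \zeta_1 \zeta_2^{-1}$ is a nontrivial root of unity. Write $\ord(\eta) = p^k m$ with $\gcd(m,p)=1$, and express $\eta = \zeta_{p^k}^{a}\zeta_m^{b}$ with $a$ coprime to $p^k$ and $b$ coprime to $m$. The goal is to verify that $v(\eta - 1) \leq 1/(p-1)$, where $v$ is normalized so $v(p)=1$.

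The argument then breaks into cases exactly as in the proof of Proposition \ref{PowerBound2}. If $m > 1$, then since $\zeta_{p^k}^{a} \equiv 1$ modulo the maximal ideal of the ring of integers of $\bb{Q}_p(\eta)$, we get $\eta \equiv \zeta_m^{b} \not\equiv 1$ in the residue field, so $|\eta - 1|_v = 1 > p^{-1/(p-1)}$. If $m=1$ and $k=0$, then $\eta = 1$, contradicting $\zeta_1 \neq \zeta_2$. The remaining case is $m = 1$ with $k \geq 1$, where $\eta$ is a primitive $p^j$-th root of unity for some $1 \leq j \leq k$. Here one uses $\Phi_{p^j}(1) = p$ together with the fact that the primitive $p^j$-th roots of unity are all Galois conjugate over $\bb{Q}_p$ (so have equal valuation), giving $v(\eta - 1) = 1/(p^{j-1}(p-1)) \leq 1/(p-1)$, with equality precisely when $j=1$.

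The main obstacle is really just setting up the cyclotomic case cleanly; there is no serious difficulty, as this is a well-known calculation of the ramification index of $\bb{Q}_p(\zeta_{p^j})/\bb{Q}_p$ that is already implicitly used in Proposition \ref{PowerBound2}. Once the case $m=1$, $k \geq 1$ is handled and combined with the trivial case $m>1$, one concludes $|\eta - 1|_v \geq p^{-1/(p-1)}$, completing the proof.
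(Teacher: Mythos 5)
Your proof is correct and follows essentially the same route as the paper: the ultrametric inequality reduces the claim to the lower bound $|\eta-1|_v\geq p^{-1/(p-1)}$ for a nontrivial root of unity $\eta=\zeta_1\zeta_2^{-1}$, which you then verify by the standard cyclotomic case analysis. The only difference is that you actually prove this lower bound (correctly), whereas the paper merely asserts it — and with what appear to be typographical slips in the exponents ($1/p$ and $1/p^{p-1}$ where $p^{-1/(p-1)}$ is meant) — so your write-up is, if anything, the more careful of the two.
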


\begin{proof}
The condition implies that $|\zeta_i - \beta|_v < \frac{1}{p^{1/(p-1)}}$ and so $|\zeta_1 - \zeta_2|_v < \frac{1}{p^{1/(p-1)}}$. Thus $|1 - \zeta_1^{-1} \zeta_2|_v  < \frac{1}{p^{1/(p-1)}}$, which is impossible as $|1 - \zeta|_v$ is at least $1/p^{1/(p-1)}$ for any root of unity $\zeta$.
\end{proof}

\subsection{Proofs of Theorems \ref{IntroUniformIh1} and \ref{IntroUniformIh3}}

We now prove Theorems \ref{IntroUniformIh1} and \ref{IntroUniformIh3}. Fix a place $v \in S$.  We start with the function
$$\lambda_v(x,\beta) = \log^+|x|_v + \log^+|\beta|_v - \log |x-\beta|_v.$$
For any real number $M > 1$, we truncate our function to get a function
$$\lambda_{v,M}(x) = \log^+|x|_v + \log^+|\beta|_v + \min(\log M, -\log |x-\beta|_v).$$
We observe that if $|x-\beta| > M^{-1}$, then $\lambda_v(x,\beta) = \lambda_{v,M}(x)$. 

\begin{proposition} \label{Lipschitz1}
We have $\Lap \lambda_{v,M} = \delta_{\zeta(0,1)} - \delta_{\zeta(\beta,M^{-1})}$. Furthermore, $\lambda_{v,M}$ is Lipschitz continuous with Lipschitz constant $O(M)$.  
\end{proposition}

\begin{proof}
As $\Lap \min(\log M, - \log |x-\beta|_v) = \delta_{\infty} - \delta_{\zeta(\beta,M^{-1})}$, it follows that $\Lap \lambda_{v,M} = \delta_{\zeta(\beta,M^{-1})} - \delta_{\zeta(0,1)}$. To obtain a Lipschitz constant for the Euclidean metric, observe that $\log^+|x|_v$ has Lipschitz constant $1$ and that $\log|x-\beta|_v$ has a Lipschitz constant of $\frac{1}{M}$ for $|x-\beta| > M^{-1}$. Hence $M+1$ serves as a Lipschitz constant for our truncated function.
\end{proof}

We will also need the following lower bound on size of the $\Gal(\ovl{K}/K)$-orbit of a root of unity. 

\begin{proposition} \label{Galois1}
Let $\zeta$ be a $n^{th}$ root of unity. Then $[K(\zeta):K] \geq \frac{\sqrt{n}}{[K:\bb{Q}]}$ for all sufficiently large $n$.
\end{proposition}

\begin{proof}
We know that $[\bb{Q}(\zeta):\bb{Q}] = \phi(n)$ where $\phi(n)$ is the number of positive integers $\leq n$ that are coprime to $n$. It is known that for all large enough $n$, we have $\phi(n) \geq \sqrt{n}$ (in fact we may replace $\sqrt{n}$ with $n^{1-\eps}$). Thus $[\bb{Q}(\zeta):\bb{Q}] \geq \sqrt{n}$. It now suffices to note that 
$$[K(\zeta):K] \cdot [K:\bb{Q}] = [K(\zeta):\bb{Q}] \geq [\bb{Q}(\zeta):\bb{Q}] \geq \sqrt{n}$$       
as desired.
\end{proof}

We now begin the proof of Theorems \ref{IntroUniformIh1} and \ref{IntroUniformIh3}. First, let $F$ be a $\Gal(\ovl{K}/K)$-invariant set of roots of unity and $\beta \in L$ for some extension $L$ of $K$ with degree $D$. Now fix a place $v$ of $L$ and let's say 
$$\max_{x \in F} \log |x-\beta|^{-1}_v \leq C$$
for some constant $C > 0$. Fix some real $M > 0$ and we split $F$ into two disjoint sets $F_1,F_2$, where $\log |x-\beta|^{-1}_v \leq M$ for $x \in F_1$ and $F_2 = F \setminus F_1$. Then for $x \in F_1$, we have $\lambda_v(x,\beta) = \lambda_{v,M}(x)$. In particular, we have
$$\left|\frac{1}{|F|}\sum_{x \in F} \lambda_{v,M}(x) - \frac{1}{|F|} \sum_{x \in F} \lambda_v(x,\beta) \right| \leq C \frac{|F_2|}{|F|}.$$
Now applying Proposition \ref{QuantEquib2} along with Proposition \ref{Lipschitz1}, we obtain that
$$\left| \frac{1}{|F|} \sum_{x \in F} \lambda_{v,M}(x) - \int \lambda_{v,M}(x) d \mu_{v} \right| \leq \frac{M}{|F|^{\delta/\kappa}} + \left(\frac{ c M \log |F|}{|F|} \right)^{1/2}.$$
As $\mu_v(D(\beta,x)) = O(x)$, We may then bound 
$$\left| \int \lambda_{v,M}(x) d \mu_v - \int \lambda_v(x,\beta) \right| \leq O\left(-\int_0^{\frac{1}{M}} \log |x| dx \right) \leq O\left( \frac{\log M}{M} \right).$$
We thus obtain the following Proposition.

\begin{proposition} \label{QuantLogBound1}
Let $v$ be a place of $K$ that is extended to $\ovl{K}$. Let $F$ be the $\Gal(\ovl{K}/K)$-orbit of some root of unity $\zeta$. Let $\max_{x \in F} \log|x-\beta|^{-1}_v = C.$ 
Then we have 
$$\left|\frac{1}{|F|} \sum_{x \in F} \lambda_v (x,\beta) - \int \lambda_v(x,\beta) d\mu_v \right| \leq \frac{C}{|F|} + O_{\eps} \left([K:\bb{Q}]|F|^{-1/2 + \eps} \right).$$
\end{proposition}

\begin{proof}
Let $\zeta$ be a $n^{th}$ root of unity and let $D = [K:\bb{Q}]$. By Proposition \ref{Galois1}, we know that $|F| \geq \frac{n^{1/2}}{D}$. In particular if $v$ is an archimedean place and $|F| \geq D^4$ and we take $M \geq |F|^4$, then there is at most one $\zeta$ inside $F$ for which $\log |x-\beta|^{-1}_v \geq \log M$. Hence we may take $|F_2| \leq 1$ for archimedean $v$ and by Proposition \ref{PowerBound3}, we may do the same for non-archimedean $v$. Now taking $\delta$ sufficiently large such that $\delta/\kappa \geq 6$, we obtain 

$$\left|\frac{1}{|F|} \sum_{x \in F} \lambda_v (x,\beta) - \int \lambda_v(x,\beta) d\mu_v \right| \leq \frac{C}{|F|} + \frac{|F|^4}{|F|^6} + O\left([K:\bb{Q}]\frac{\log |F|}{|F|^{1/2 - \eps}} \right).$$
This reduces to 
$$\left|\frac{1}{|F|} \sum_{x \in F} \lambda_v (x,\beta) - \int \lambda_v(x,\beta) d\mu_v \right| \leq \frac{C}{|F|} + O_{\eps} \left([K:\bb{Q}]|F|^{-1/2 + \eps} \right)$$
as desired. 
\end{proof}

\begin{proof}[Proof of Theorem \ref{IntroUniformIh1}] First, let $S'$ be the set of places above $S$. Observe that $|S'| \leq [K:\bb{Q}]|S| \leq D |S|$. By Proposition \ref{PowerBound2}, there exists some $C'$ depending on $S$ and $[K:\bb{Q}]$ such that if $\deg_{\bb{Q}}(\zeta) > C'$, then 
$$\log |\zeta-\beta|_v^{-1} \leq 1$$
for any non-archimedean place $v \in S$. Now let $\beta \in K^{\times} \setminus \mu_{\infty}$ and assume that $\zeta \in \mu_{\infty}$ is $S'$-integral relative to $\beta$. Let $F$ be the $\Gal(\ovl{\bb{Q}}/K)$-orbit of $\zeta$. Then we have $\lambda_v(x,\beta) = 0$ for all $v \not \in S'$ and $x \in F$. Hence
\begin{equation} \label{eq: Height1}  h(\beta) = \frac{1}{|F|}\sum_{v \in M_K}  \sum_{x \in F} N_v\lambda_v (x,\beta) = \frac{1}{|F|} \sum_{v \in S'} \sum_{x \in F} N_v \lambda_v(x,\beta).
\end{equation}

By Proposition \ref{QuantLogBound1}, as for non-archimedean $v$ we have $\log |x-\beta|_v^{-1} \leq 1$ for all $x \in F$ and for archimedean $v$, by Proposition \ref{PowerBound1} we have 
$$\log |x-\beta|_v^{-1} \leq O_{\eps}(D^3 (h(\beta)+1) |F|^{\eps}),$$
we obtain
\begin{equation} \label{eq: QuantLog1}
\left| \frac{1}{|F|} \sum_{x \in F} \lambda_v(x,\beta) - \int \lambda_v(x,\beta) d \mu_v \right| 
\end{equation}
$$\leq O_{\eps}(D^4 (h(\beta)+1) |F|^{-1 + \eps}) +  O_{\eps}(D |F|^{-1/2 + \eps}).
$$
Hence if $|F|$ is large enough depending only on $D$ and $S$, we obtain 
$$\left|\frac{1}{|F|} \sum_{v \in S} \sum_{x \in F} N_v \lambda_v(x,\beta) - \int N_v\lambda_v(x,\beta) d \mu_v \right| \leq \frac{h(\beta)+1}{D^5}.$$
By Dobrolowski \cite{Dob79}, we know that $h(\beta) \geq O(\frac{1}{D^{3/2}})$. Since $\int \lambda_v(x,\beta) d \mu_v = 0$, we have 
$$\frac{1}{|F|} \sum_{v \in S} \sum_{x \in F} N_v \lambda_v(x,\beta) < h(\beta)$$
which is a contradiction to \eqref{eq: Height1}. 
\end{proof}

To prove Theorem \ref{IntroUniformIh3}, we have to be more careful with our estimates. 

\begin{proof}[Proof of Theorem \ref{IntroUniformIh3}]
First, using Corollary \ref{PowerBound3}, we know that for each non-archimedean place $v$ of $K$, there is at most one root of unity such that $\log |\zeta - \beta|_v^{-1} \geq 1$. Hence up to having $|S_{\fin}|$ exceptions, we may assume that $\log |\zeta - \beta|_v^{-1} \leq 1$ for all non-archimedean places $v$. By Dobrolowski's bound \cite{Dob79}, we know there exists $c$ such that $h(\beta) > \frac{1}{cD^{2}-1}.$
Using \eqref{eq: QuantLog1}, we again get
$$\left|\frac{1}{|F|} \sum_{x \in F} \lambda_v(x,\beta) - \int \lambda_v(x,\beta) d \mu_v \right|$$
$$\leq O_{\eps}(D^4 (h(\beta)+1)|F|^{-1 + \eps}) + O_{\eps}(D |F|^{-1/2 + \eps}).$$
Summing up over $s \in S$, we obtain
$$\left|\frac{1}{|F|} \sum_{v \in S} \sum_{x \in F} N_v \lambda_v(x,\beta) - \sum_{v \in S} N_v \int \lambda_v(x,\beta) d \mu_v \right|$$ 
$$\leq O_{\eps}(|S| D^4 (h(\beta)+1)|F|^{-1 + \eps}) + O_{\eps}(|S| D |F|^{-1/2 + \eps}).$$
Hence if $|F| > O(c^3 |S|^{3} D^{6})$, we obtain that 
$$\left|\frac{1}{|F|} \sum_{x \in F} \sum_{v \in S} N_v \lambda_v(x,\beta) \right| \leq \frac{h(\beta)+1}{cD^{2}}.$$
Then \eqref{eq: Height1} tells us that 
$$h(\beta) \leq \frac{h(\beta)+1}{c D^2} \implies h(\beta) \leq \frac{1}{cD^2-1}$$
which is a contradiction as desired.
\end{proof}

\section{Quantitative Equdistribution for Elliptic Curves}
We now move onto the case of elliptic curves $E$. Over number fields, quantitative versions of equidistribution has been proven by Baker--Petsche \cite{BP05} and in the function field case, by Petsche \cite{Pet09}. Petsche's result is explicit and in a similar form to Favre--Rivera-Letelier's theorem but the results in Baker--Petsche are not in the same form and it does not seem straightforward to deduce an explicit bound for a given test function $f$. 
\par 
We will first show that when $v$ is a non-archimedean place of a number field $K$, it is possible to combine the results of \cite{BP05} and \cite{Pet09} to obtain a statement similar to Theorem \ref{QuantEquib1}. When $v$ is an archimedean place, we will establish an analogous result by using Favre--Rivera-Letelier's approach instead.
\par 
Let $E$ be an elliptic curve over a number field $K$ with semistable reduction and let $L$ be a finite extension of $K$. We will be interested in establishing bounds on the equidistribution of $\Gal(\ovl{K}/L)$-orbits $F$ of torsion points in terms of $|F|$ and $[L:K]$, assuming that the data of our elliptic curve $E$ remains fixed. 
\par 
We first state some preliminaries on elliptic curves that we require. For an elliptic curve $E$ over a number field $K$, we may define a Neron--Tate height $h_E: E(\ovl{K}) \to \bb{R}_{\geq 0}$ such that $h_E([n]x) = n^2 h_E(x)$ where $[n]$ is the multiplication by $n$ map. 
\par 
Similar to the Weil height $h(x)$, we have a local decomposition 
$$h_E(x) = \sum_{v \in M_K} N_v \lambda_v(x)$$
where $\lambda_v: E(\bb{C}_v) \setminus \{O\} \to \bb{R}$ is a function that has a singularity at the origin $O$. One may consult \cite[Chapter VI]{Sil94} for explicit definitions of $\lambda_v$. For an archimedean place $v$, we will let $\mu_v$ denote the Haar measure on $E(\bb{C}_v)$. For non-archimedean places, we will let it denote the canonical probability measure supported on the skeleton $\Sigma$ of the Berkovich analytification $E^{\an}_v$ at the place $v$ as defined in \cite[Section 5.2]{BP05}. For each place $v$, we have that $\int \lambda_v(x) d \mu_v = 0$. For more background on $E^{\an}_v$, the reader may consult \cite[Section 3]{BP05}. 
\par 
Before we begin, we need a preliminary lemma on the size of Galois orbits of torsion points.

\begin{proposition} \label{EllipticGalois1}
Let $E$ be an elliptic curve over a number field $K$ and $L$ a finite extension of $K$. Let $z \in E(\ovl{K})$ be a $n$-torsion point. Then if $F$ denotes the $\Gal(\ovl{L}/L)$-orbit of $z$, we have 
$$|F| \geq c_{\eps} \frac{n^{1-\eps}}{[L:K]} $$
where the constant $c_{\eps} > 0$ only depends on the elliptic curve $E,K$ and $\eps > 0$. 
\end{proposition}

\begin{proof}
As in Proposition \ref{Galois1}, it suffices to show that the $\Gal(\ovl{K}/K)$-orbit of $z$ has orbit size $\geq c n^{1-\eps}$. But this follows immediately from (45) and (46) of \cite{BIR08}.     
\end{proof}

\subsection{The non-archimedean case.} Recall that our elliptic curve $E$ is defined over $K$ and that $L$ is a finite extension of $K$. Let $v$ be a non-archimedean place of $L$. Given a set of points $Z = \{P_1,\ldots,P_n\}$ in $E(\bb{C}_v)$, we define the local discrepancy as 
$$\Lambda_v(Z) = \frac{1}{n^2} \sum_{1 \leq i \not = j \leq n} \lambda_v(P_i - P_j).$$
Baker--Petsche then defines a smoothened analogue $D_v(Z)$ such that $D_v(Z) \geq 0$ for all places $v$ and 
$$\Lambda_v(Z) = D_v(Z) - \frac{1}{12n} \log^+|j_E|_v$$
where $j_E$ is the $j$-invariant of our elliptic curve $E$. By \cite[Theorem 8]{BP05}, as each $D_v(z)$ is non-negative, we have the following upper bound 
\begin{equation} \label{eq: LocalDiscrepancy1}
D_v(Z) \leq [L:\bb{Q}] \left( 4 h(Z) + \frac{1}{n} \left(\frac{1}{2} \log n + \frac{1}{12} h(j_E) + \frac{16}{5} \right) \right).
\end{equation}
Now for a real number $M$ with $\log M > \frac{1}{12} \log^+|j_E|_v$ and $\beta \in E(\bb{C}_v)$, consider the function $\lambda_{v,M}(x) = \min(\log M, \lambda_v(x-\beta))$. We will now apply Theorem 10 of \cite{Pet09} to obtain an estimate on
$$\left|\frac{1}{|F|} \sum_{x \in F} \lambda_{v,M}(x) - \int \lambda_{v,M} d \mu_v \right|.$$

\begin{proposition} \label{QuantNonArchElliptic1}
Let $F$ be a $\Gal(\ovl{K}/L)$-invariant finite subset of $E(\ovl{K})$. We have 
$$\frac{1}{[L:\bb{Q}]}\left|\frac{1}{|F|} \sum_{x \in F} \lambda_{v,M}(x) - \int \lambda_{v,M} d \mu_v \right| \leq \sqrt{\log M} \left( O\left(h(Z) + \frac{\log |F|}{|F|} \right) + \frac{\log M}{|F|} \right)^{1/2}$$
\end{proposition}

\begin{proof}
First, by \cite[Section 3.2]{BP05}, we may write $\lambda_v(x-\beta)$ as $i_v(x,\beta) + j_v(x,\beta)$ where $i_v(x,\beta)$ is a local intersection term and $j_v(x,\beta)$ factors through the retraction map $r: E^{\an}_v \to \Sigma$. Since $j_v(x,\beta) \leq \frac{1}{12} \log^+ |j_E|_v$, if $\lambda_{v,M}(x) \geq M$ then it must be that $x$ and $\beta$ lie in the same residue disc. Hence $\lambda_v(x-\beta) = i_v(x,\beta) + \frac{1}{12} \log^+|j_E|_v$. 
\par 
We may identify this residue disc with the Berkovich unit disc $\bb{D}_{\an}(0,1)$ with $\beta$ being $0$. On the skeleton, we may bound $\int |\lambda_{v,M}'|^2 d \mu_v$ by a constant independent of $M$. Then on the path $\Gamma'$ from the Gauss point $\zeta(0,1)$ to $\zeta(\beta,M)$, our function $\lambda_{v,M}(x)$ is increasing with derivative $1$ with respect to the path metric and is locally constant on $\bb{D}_{\an}(0,1) \setminus \Gamma'$. Thus $\lambda_{v,M}(x)$ belongs to the space $S_{\Gamma}(E,\bb{R})$ where $\Gamma = \Gamma' \cup \Sigma$ with $|\lambda_{v,M}'| = 1$ on $\Gamma'$ and $0$ otherwise, and $l_0(\Gamma) = \log M$. 
\par 
Applying \cite[Theorem 10]{Pet09}, we obtain 
$$\left|\frac{1}{|F|} \sum_{x \in F} \lambda_{v,M}(x) - \int \lambda_{v,M} d \mu_v \right| \leq \sqrt{\log M} \left( D_v(F) + \frac{\log M}{|F|} \right)^{1/2}.$$
We now use \eqref{eq: LocalDiscrepancy1} to upper bound $D_v(F)$ and obtain 
$$\frac{1}{[L:\bb{Q}]} \left|\frac{1}{|F|} \sum_{x \in F} \lambda_{v,M}(x) - \int \lambda_{v,M} d \mu_v \right| \leq \sqrt{\log M} \left(O\left(h(Z) + \frac{\log |F|}{|F|} \right) + \frac{\log M}{|F|} \right)^{1/2}$$
as desired.
\end{proof}

We now specialize by taking $M = |F|$ and assuming $Z$ are all torsion points, so that $h_E(Z) = 0$.  Also, as $d\mu_v$ is supported on the skeleton, we have $\int \lambda_{v,M}(x) d \mu_v = \int \lambda_v(x) d \mu_v = 0$. If we let $C = \max_{x \in F} \lambda_v(x - \beta)$ and let $N$ be the number of elements of $|F|$ for which $\lambda_v(x-\beta) > N$, we obtain 
\begin{equation} \label{eq: NonArchQuantLog1}
\left|\frac{1}{|F|} \sum_{x \in F} \lambda_v(x-\beta) \right| \leq [L:\bb{Q}] O\left(\frac{\log |F|}{|F|} \right)^{1/2} + \frac{CN}{|F|}.
\end{equation}

\subsection{The archimedean case.} We now assume that $v$ is an archimedean place. Following Favre--Rivera-Letelier, we will introduce an energy pairing between two measures $\mu, \mu'$ on $E(\bb{C}_v)$. We refer readers to \cite[Sections 2.1 and 2.2]{BP05} for background on the Laplacian $\Lap$ for $E(\bb{C})$. In summary, one may take an isomorphism $E(\bb{C}) \simeq \bb{C}/\Lambda$ for a normalized lattice $\Lambda = \bb{Z} + \tau \bb{Z}$ where $\tau = a+bi$ with $b > 0$. For a continuous function $g$ on $E(\bb{C})$, we define $\Lap g$ to be 
$$\Lap g = \frac{b}{2 \pi} \left(\frac{\partial^2}{\partial x^2} + \frac{\partial^2}{\partial y^2} \right) g.$$
The local Neron function $\lambda_v$ satisfies $\Lap \lambda_v = \mu - \delta_O$ where $O$ is the origin. 
\par 
To define the energy pairing, we require a replacement for $\log |x-y|$. The main property of $\log |x-y|$ is that its Laplacian is $\delta_{y} - \delta_{\infty}$. Using the origin $O$ as our replacement for $\infty$, we wish to find a function $g_v(x,y)$ on $E(\bb{C})$ whose Laplacian with respect to $x$ is $\delta_{y} - \delta_{O}$. A natural candidate is given by
$$g_v(x,y) = -\lambda_v(x-y) + \lambda_v(x) + \lambda_v(y).$$
We have $\Lap g_v(x,y) = (\delta_y - \mu) + (\mu - \delta_O) = \delta_y - \delta_O$. Given a measure $\mu$ which has locally continuous potentials, we may define a Green's function $f_v(x) = \int g_v(x,y) d \mu(y)$ and observe that 
$$\Lap f_v(x) = \Lap \int g_v(x,y) d \mu(y) = \int \Lap_x g_v(x,y) d \mu(y) = \int (\delta_y - \delta_O) d \mu(y) = \mu - \mu(E(\bb{C})) \delta_O.$$
Now given two measures $\mu,\mu'$, which are a union of finite delta masses or measures with locally continuous potential, we define its local energy as 
$$(\mu,\mu') = -\iint_{E x E \setminus \Diag} g_v(x,y) d \mu(x) d \mu'(y).$$
Analogous to \cite[Proposition 2.6]{FRL06}, we deduce that if $\mu$ is a signed measure with $\mu(E(\bb{C})) = 0$ and if $\mu = \mu_1 - \mu_2$ such that $\mu_1,\mu_2$ have locally continuous potentials, then $\mu = \Lap f$ for some continuous function on $E(\bb{C})$. Furthermore for such measures, we have $(\mu,\mu) \geq 0$ with equality happening iff $\mu = 0$. 
\par 
Now, if $F$ is a finite set of points on $E(\bb{C})$, we will let $[F] = \frac{1}{|F|} \sum_{x \in F} \delta_x$. Similar to the case on $\bb{P}^1$, under the isomorphism $E(\bb{C}) \simeq \bb{C}/\Lambda$, we will let $\zeta(x,\eps)$ denote a circle of radius $\eps$ around $x$ and we let $\delta_{\zeta(x,\eps)}$ be the uniform probability measure on this circle. We next compute the self-pairing $([F] -\mu_v,[F] - \mu_v)$. 

\begin{proposition} \label{EnergyPairing2}
We have 
$$([F] - \mu_v ,[F] - \mu_v) = \frac{1}{|F|^2}\sum_{x \not = x'} \lambda_v(x - x').$$
\end{proposition}

\begin{proof}
First, we have 
$$([F],[F]) = \frac{1}{|F|^2} \sum_{x \not = x'} \lambda_v(x-x') - \frac{2}{|F|} \sum_{x \in F} \lambda_v(x).$$
Next, as $\int \lambda_v(x) d \mu_v = 0$, it follows that $\int \lambda_v(x-y) d \mu_v = 0$ too as $\mu_v$ is invariant under translation. Hence $(\mu_v, \mu_v) = 0$. Finally, we have
$$-([F],\mu_v) = -\frac{1}{|F|} \sum_{x \in F} \int \left( \lambda_v(x - y) - \lambda_v(x) - \lambda_v(y) \right) d \mu_v = \frac{1}{|F|} \sum_{x \in F} \lambda_v(x).$$

Thus summing up, we get
$$([F]-\mu_v, [F]-\mu_v) = \frac{1}{|F|^2} \sum_{x \not = x'} \lambda_v(x-x')$$
as desired.
\end{proof}

Given an $\eps > 0$, we regularize $[F]$ by replacing each delta mass with $\delta_{\zeta(x,\eps)}$, the uniform probability measure on the circle of radius $\eps$ from $x$. As in the case on $\bb{P}^1$, we denote this measure by $[F]_{\eps}$. We now want to estimate 
$$([F]-\mu_v,[F]-\mu_v) - ([F]_{\eps} -\mu_v, [F]_{\eps} - \mu_v).$$
To do so, we need to obtain some bounds regarding $\lambda_v(x)$. 

\begin{proposition} \label{LocalHeight1}
Under the isomorphism $E(\bb{C}_v) \simeq \bb{C}/\Lambda$, the function $\lambda_v(z)$ viewed as a periodic function on $\bb{C}$ is smooth outside the lattice $\Lambda$. On a small disc around the origin, the function $\lambda_v(z) + \log |z|$ is smooth and extends to the origin.  
\end{proposition}

\begin{proof}
By \cite[Chapter VI, Proposition 3.1]{Sil94}, we know that $\lambda_v(z)$ is real-analytic away from the origin. Near the origin, we may write it as $f_1(z) + \log |f_2(z)|$ where $f_1(z)$ is a smooth function and $f_2$ is a meromorphic function with a pole at the origin. Thus $\log |f_2(z)| + \log |z|$ extends to a smooth function at the origin as desired.     
\end{proof}

We then get the following two bounds. 

\begin{proposition} \label{ArchRegBound1}
For $z \in \bb{C}$ with $|z|$ sufficiently small, we have
$$\left| \lambda_v(z) + \log |z| \right| \leq O(1).$$
\end{proposition}

\begin{proof}
This follows from the fact that $\lambda_v(z) + \log |z|$ is a smooth function and hence a bounded one near the origin. 
\end{proof}

\begin{proposition} \label{ArchRegBound2}
Given $z \in \bb{C}/\Lambda$, let $d(z)$ be the distance from $z$ to the origin. Then for all $y \in \bb{C}/\Lambda$ satisfying $|y-z| < \frac{1}{2} d(z)$, we have
$$|\lambda_v(y) - \lambda_v(z)| \leq \frac{1}{d(z)}O(|y-z|).$$
\end{proposition}

\begin{proof}
Let $\eps > 0$ be a constant such that if $d(z) > \eps$, then $d(y) > \frac{\eps}{2}$ for any $y$ such that $|y-z| \leq \frac{1}{2} d(z)$. We may choose $\eps$ to be $\frac{1}{2}$ of the distance between the four vertices of $\Lambda$. 
\par 
Now outside of $D(O,\frac{\eps}{2}) \subseteq E(\bb{C})$, our function $\lambda_v(z)$ is smooth and thus Lipschitz continuous with some constant $C$. Thus if $d(z) > \eps$ and $|y-z| < \frac{1}{2} d(z)$, then $y \not \in D(0,\frac{\eps}{2})$ and hence $|\lambda_v(y) - \lambda_v(z)| \leq C |y-z|$. 
\par 
Now let's say $z \in D(O,\eps)$. Then $y \in D(O,2 \eps)$ and we know that $\lambda_v(y) + \log |z|$ is a smooth function on $D(O, 2 \eps)$. Thus we have 
$$\lambda_v(y) - \lambda_v(z) + \log |y| - \log |z| \leq O(|y-z|).$$
On the other hand, we may bound $\log |y| - \log |z|$ by 
$$\log |y| - \log |z| = \log |\frac{y}{z}| = \log |1 + \frac{y-z}{z}| \leq 2 \left|\frac{y-z}{z} \right| = \frac{1}{d(z)} O(|y-z|).$$
Here, the first inequality follows from the fact that $|\frac{y-z}{z}| \leq \frac{1}{2}$ and using the Taylor expansion of $\log(1+z)$. Putting the two upper bounds together give us our Proposition.
\end{proof}

Given a finite set $F \subseteq E(\bb{C}_v)$ of points, we now set
$$d_v(F) = \min_{z \not = z' \in F} \{ |z-z'|_v, d(z)\}.$$
We now bound the energy difference that occurs when regularizing our discrete set $F$. For a point $z \in E(\bb{C})$, for $\eps$ small enough the notion of a disc of radius $\eps$ makes sense via the isomorphism $E(\bb{C}) \simeq \bb{C}/\Lambda$. We let $\delta_{z,\eps}$ denote the uniform probability measure on the boundary of the disc centered at $z$ and with radius $\eps$. Given $[F] = \frac{1}{|F|} \sum_{z \in F} \delta_z$, we let $[F]_{\eps} = \frac{1}{|F|} \sum_{z \in F} \delta_{z,\eps}$. 

\begin{proposition} \label{ArchRegBound3}
For $\eps \leq \frac{1}{4} d(F)$, we have
$$([F]_{\eps} - \mu_{v} , [F]_{\eps} - \mu_{v}) \leq ([F] - \mu_{v}, [F] - \mu_{v}) + O\left(\frac{1}{d_v(F)} \left( \eps + \frac{\log \eps^{-1}}{|F|} \right) \right).$$
\end{proposition}

\begin{proof}
Note that if $\mu$ has locally continuous potentials, then
$$(\mu- \mu_v, \mu- \mu_v) = \iint \lambda_v(x-y) d \mu(x) d \mu(y).$$
Hence we may write
$$([F]-\mu_v, [F]-\mu_v) - ([F]_{\eps} - \mu_v, [F]_{\eps} - \mu_v) $$
$$= \frac{1}{|F|^2} \sum_{z \not = z' \in F} \left(\lambda_v(z-z') - \int \lambda_v(x-y) \delta_{z,\eps}(x) \delta_{z',\eps}(y) \right) + \frac{1}{|F|^2} \sum_{z \in F} \iint \lambda_v(x-y) \delta_{z,\eps}(x) \delta_{z,\eps}(y).$$
We first bound $\lambda_v(z-z') - \int \lambda_v(x-y) \delta_{z,\eps}(x) \delta_{z',\eps}(y)$. For $x \in \supp \delta_{z,\eps}$ and $y \in \delta_{z',\eps}$, we have 
$$|(x-y) - (z-z')| \leq 2 \eps \leq \frac{1}{2} d(z-z').$$
We may then apply Proposition \ref{ArchRegBound2} to obtain 
$$|\lambda_v(x-y) - \lambda_v(z-z')| \leq \frac{1}{d(z-z')} O(\eps).$$
We next bound $\iint \lambda_v(x-y) \delta_{z,\eps}(x) \delta_{z,\eps}(y)$. As $|x-y|$ is sufficiently small, we know that $\lambda_v(x-y) + \log |x-y|$ is a bounded function. Thus we get
$$\iint \lambda_v(x-y) \delta_{z,\eps}(x) \delta_{z,\eps}(y) \leq -\iint \log |x-y| \delta_{z,\eps}(x) \delta_{z,\eps}(y) + O(1) = \log \eps^{-1} + O(1).$$
Putting it together, we obtain 
$$([F]-\mu_v, [F]-\mu_v) - ([F]_{\eps} - \mu_v, [F]_{\eps} - \mu_v)$$
$$\leq \frac{1}{|F|^2} \sum_{z \not = z' \in F} \frac{1}{d(z-z')} O(\eps) + \frac{1}{|F|^2} \sum_{z \in F} \left(\log \eps^{-1} + O(1) \right) \leq  O\left(\frac{1}{d(F)} \left( \eps + \frac{\log \eps^{-1}}{|F|} \right) \right)$$
as desired. 
\end{proof}

We now have all the ingredients needed to prove our quantitative equidistribution theorem for archimedean places. 

\begin{proposition} \label{ArchRegBound4}
Let $F = \{z_1,\ldots,z_n\}$ be a $\Gal(\ovl{K}/L)$-invariant susbet of $E(\ovl{K})$. Let $\eps > 0$ be a constant such that $\eps < \frac{1}{4 d_v(F)}$ for every archimedean place $v$. Then
$$\frac{1}{[L:\bb{Q}]}([F]_{\eps} - \mu_v, [F]_{\eps} - \mu_v) \leq  h_E(F) + \frac{1}{|F|} O\left( \log \eps^{-1} \right) + \eps \sum_{v \text{ arch. }} \frac{1}{d_v(F)}$$
\end{proposition}

\begin{proof}
Let $F = \{z_1,\ldots,z_n\}$ be a $\Gal(\ovl{K}/L)$-invariant subset of $E(\ovl{K})$. First, we know that for non-archimedean $v$, we have 
$$\frac{1}{|F|^2}\sum_{z \not = z' \in F} \lambda_v(z - z') \geq -\frac{1}{12|F|} \log^+ |j_E|_v.$$
We also have the upper bound
$$\frac{1}{|F|^2} \sum_{v \in M_K} \sum_{z \not = z' \in F} N_v \lambda_v(z-z') \leq  \frac{4}{|F|} \sum_{z \in F} h_E(z).$$
Thus we have 
$$\sum_{v \text{ arch. }} N_v([F]-\mu_v, [F]-\mu_v)_v = \frac{1}{|F|^2} \sum_{v \text{ arch. }} \sum_{z \not = z' \in F} N_v \lambda_v(z-z') \leq \frac{4}{|F|}\sum_{z \in F} h_E(z) +  O\left(\frac{1}{12|F|} \right).$$
Applying Proposition \ref{ArchRegBound3}, we obtain
$$\sum_{v \text{ arch. }} N_v ([F]_{\eps} - \mu_v, [F]_{\eps} - \mu_v)_v \leq \frac{1}{|F|} O\left(\sum_{z \in F} h_E(z) + [L:\bb{Q}]\log \eps^{-1} \right) + \eps \sum_{v \text{ arch. }} \frac{1}{d_v(F)}. $$
Here, we note that the constants in Proposition \ref{ArchRegBound3} may be made independent of $L$ as for any archimedean place $v$ of $L$, if it lies above the place $v'$ of $K$ then the constants are the same as for $v'$. Since each $([F]_{\eps} - \mu_v, [F]_{\eps}-\mu_v)$ is non-negative, we obtain an upper bound on each of them as desired.    
\end{proof}

Finally if $f$ is a continuous function on $E(\bb{C}_v)$, we have 
$$\left|\int f d([F]_{\eps} - \mu_v) \right| \leq (\Lap f, \Lap f)^{1/2} ([F]_{\eps} - \mu_v, [F]_{\eps}-\mu_v)^{1/2}$$
and
$$\left|\int f d([F]) - \int f d([F]_{\eps}) \right| \leq \eps \Lip(f).$$
If we assume that $F$ is a set of $m$-torsion points, so that $\hat{h}(F) = 0$, then we may also bound $d_v(F)$ from below by $\frac{c}{m}$ for some $c > 0$. Then since we fix $K$ and our elliptic curve $E$, by Proposition \ref{EllipticGalois1} we know that $m = O([L:\bb{Q}] n^2)$ and so taking $\eps = |F|^{-6}$ gives us the following bound. 

\begin{proposition} \label{ArchQuant1}
Let $F$ be a $\Gal(\ovl{K}/L)$-invariant set of torsion points with $|F| > 1$ and $f: E(\bb{C}_v) \to \bb{R}$ a continuous function for an archimedean place $v$ of $L$. Then
$$\frac{1}{[L:\bb{Q}]}\left|\frac{1}{|F|}\sum_{x \in F} f(x) - \int f(x) d \mu_v \right| \leq \frac{\Lip(f)}{|F|^6} + (\Lap f, \Lap f)^{1/2} O\left(\frac{\log |F|}{|F|} \right)^{1/2} + \frac{[L:\bb{Q}]^2}{|F|^4}.$$
\end{proposition}

Now again for fixed $M > 1$, we will define a truncated version of $\lambda_v(x,\beta)$. We let $\delta_{\zeta(0,M^{-1})}$ be probability measure on the circle centered at $0$ with radius $M^{-1}$. We then define
$$\lambda_{v,M}(x) = -\int \left(\lambda_v(x-y) - \lambda_v(x) \right) ( \mu_v - \delta_{\zeta(0,M^{-1})}) = \int \lambda_v(x-y) (\delta_{\zeta(0,M^{-1})})$$
as $\int \lambda_v(x-y) d \mu_v = 0$ and $\int \lambda_v(x) (\delta_{\zeta(0,M^{-1})} - \mu_v) = 0$. 

\begin{proposition} \label{ArchQuant2}
Assume that $M$ is sufficiently large depending on $E$. For s$x \in E(\bb{C}_v)$ such that $d(x) > M^{-1/2}$, we have $|\lambda_{v,M}(x) - \lambda_v(x)| \leq O(M^{-1/2})$. We also have the upper bounds 
$$\Lip(\lambda_{v,M}) \leq O(M), \sup |\lambda_{v,M}| \leq O(\log M) \text{ and }\langle \Lap \lambda_{v,M} , \Lap \lambda_{v,M}  \rangle \leq O(\log M).$$
\end{proposition}

\begin{proof}
If $x$ satisfy $d(x) > M^{-1/2}$, we may then apply Proposition \ref{ArchRegBound2} to get $|\lambda_v(x-y) -\lambda_v(x)| \leq O(M^{-1/2})$ for $y \in \supp \delta_{\zeta(0,M^{-1})}$. Hence 
$$\left| \lambda_v(x) - \int \lambda_v(x-y) (\delta_{\zeta(0,M^{-1})}) \right| \leq O(M^{-1/2})$$
as desired. Clearly $\Lap \lambda_{v,M} = \delta_{\zeta(0,M^{-1})} - \mu_v$ and hence we may bound $(\Lap \lambda_{v,M}, \Lap \lambda_{v,M}) \leq 2 \sup |\lambda_{v,M}|$. To bound $\sup |\lambda_{v,M}|$, we have to bound $\int \lambda_v(x-y) \delta_{\zeta(0,M^{-1})}$. As $M$ may be taken sufficiently large, we split into two cases. The first is when $x$ is not near the origin. Then for $y \in \supp \delta_{\zeta(0,M^{-1})}$, $x-y$ remains away from the origin and so $\lambda_v(x-y)$ remains bounded. 
\par 
If $x$ is near the origin, then we know that $|\lambda_v(x-y) - \log |x-y|| \leq O(1)$. But $\left| \int \log |x-y| \delta_{\zeta(0,M^{-1})} \right| \leq \log M$ and so 
$$\left| \int \lambda_v(x-y) \delta_{\zeta(0,\eps)} \right| \leq \log M + O(1) = O(\log M)$$
as desired. To bound the Lipschitz constant, it suffices to prove that for $x,x'$ with $|x-x'|_v$ sufficiently small, we have $\left| \lambda_{v,M}(x) - \lambda_{v,M}(x') \right| \leq O(M)|x-x'|_v$. This is equivalent to bounding 
$$\left|\int \lambda_v(x-y) - \lambda_v(x'-y) (\delta_{\zeta(0,M^{-1})}) \right|.$$
If $x$ is not near the origin, then as $\lambda_v$ is a smooth function, its derivative is bounded and so $\left| \lambda_v(x-y) - \lambda_v(x'-y) \right| \leq O(|x-x'|_v)$ for $y \in \supp \delta_{\zeta(0,M^{-1})}$ and so we are done. Else we know that $\lambda_v(x-y) - \log |x-y|_v$ is a smooth function, say $h_v$, and so 
$$\left| \int (\lambda_v(x-y) - \lambda_v(x'-y)) (\delta_{\zeta(0,M^{-1})}) \right| $$
$$\leq \left|\int (h_v(x) - h_v(x')) \delta_{\zeta(0,M^{-1})} \right| + \left|\int (\log |x-y| - \log |x'-y|) \delta_{\zeta(0,M^{-1})} \right|.$$
Again as $h_v(x)$ is smooth, we may bound the first integral by $O(|x-x'|_v)$. The second expression may be computed to be $\max(-\log M, \log |x|) - \max(-\log M, \log |x'|)$. This may be bounded by $MO(|x-x'|_v)$ and so we conclude that $\Lip(\lambda_{v,M}) \leq O(M)$ as desired.
\end{proof}

Putting everything together, we obtain the following quantitative equidistribution bound for $\lambda_{v}(x-\beta)$. 

\begin{proposition} \label{ArchQuantLogBound1}
Let $F$ be a $\Gal(\ovl{K}/L)$-invariant set of torsion points for $E$ and let $C = \max_{x \in F} \lambda_v(x-\beta)$. Let $N$ be the number of elements of $F$ such that $\lambda_v(x-\beta) \geq 4 \log F$. Then for $|F| \geq O(D)$, we have 
$$\frac{1}{[L:\bb{Q}]}\left|\sum_{x \in F} \lambda_v(x-\beta) \right| \leq \frac{N(C+ \log |F|)}{|F|} + O\left( \frac{\log |F|}{|F|^{1/2}} \right).$$
\end{proposition}

\begin{proof}
Take $M = |F|^2$. By Proposition \ref{ArchQuant2}, we know that 
$$\frac{1}{[L:\bb{Q}]} \frac{1}{|F|} \sum_{x \in F} \left( \lambda_{v,M}(x) - \lambda_v(x-\beta) \right) \leq O(|F|^{-1}) + \frac{N(C + \log |F|)}{|F|}.$$
Applying Proposition \ref{ArchQuant1} along with Proposition \ref{ArchQuant2} to bound $\langle \lambda_{v,M}, \lambda_{v,M} \rangle$ and $\Lip(\lambda_{v,M})$, as $|F| \geq O(D)$ we obtain 
$$\frac{1}{[L:\bb{Q}]} \left|\frac{1}{|F|} \sum_{x \in F} \lambda_{v,M}(x) - \int \lambda_{v,M}(x) \right| \leq O\left(\frac{\log |F|}{|F|^{1/2}} \right).$$
Finally since $\lambda_{v,M}(x)$ differs from $\lambda_v(x-\beta)$ by at most $O(|F|^{-1})$ outside a disc of radius $|F|^{-1}$ around $\beta$ and the integral of $\lambda_{v,M}(x)$ and $\lambda_v(x-\beta)$ over this disc is at most $O(\frac{\log |F|}{|F|})$, we get 
$$\frac{1}{[L:\bb{Q}]}\left|\frac{1}{|F|} \sum_{x\in F} \lambda_v(x-\beta) - \int \lambda_v(x-\beta) d \mu_v \right| \leq \frac{N(C+ \log |F|)}{|F|} + O\left(\frac{\log |F|}{|F|^{1/2}}\right)$$
which gives us our Proposition as $\int \lambda_v(x-\beta) d \mu_v = 0$. 
\end{proof}

\section{Quantitative Logarithmic Equidistribution for Elliptic Curves}

\subsection{Bounding distances to torsion points for $E$}
We now move onto the case of elliptic curves. We will require a version of linear forms on logarithms that handle points on elliptic curves. Let $E$ be an elliptic curve and fix an isomorphism $\bb{C}/\Lambda \simeq E(\bb{C})$, where $\Lambda$ is the period lattice. This map is given by $z \mapsto [\wp(z) : \wp'(z) : 1]$ where $\wp(z)$ is the Weierstrass p-function associated to the lattice $\Lambda$. If we let $\omega_1,\omega_2$ be a basis for the lattice $\Lambda$ and $\Pi$ a fundamental domain for $\bb{C}/\Lambda$, then we may view our isomorphism $\Pi \simeq E(\bb{C})$ as an exponential map and we denote the inverse map by $\log_E$. We now state a theorem due to David--Hirata-Kohno with some simplifications done. For an elliptic curve $E$, we let $h_E$ denote the Neron--Tate height on it.

\begin{theorem}[Theorem 1, \cite{DHK02}] \label{EllipticBaker1}
Let $k$ be a positive integer and let $E$ be an elliptic curve defined over some number field $K$. There exists a constant $C > 0$ such that the following hold: Let $K'$ be a number field of degree $D$ over $K$ and $L(z) = \beta_0 z_0 + \cdots + \beta_k z_k$ be a non-zero linear form on $\bb{C}^{k+1}$ with coefficients in $K'$. Let $u_1,\ldots,u_k$ be complex numbers such that $\gamma_i = (1,\wp(u_i), \wp'(u_i)) \in E(K')$. Let $B,V_1,\ldots,V_k$ be real numbers satisfying
$$\log B \geq \max\{1, h(\beta_i); 0 \leq i \leq k \}$$
$$V_1 \geq \cdots \geq V_k$$
$$\log V_i \geq \max \left \{ e, h_{E}(\gamma_i), \frac{|u_i|^2}{D} \right \}.$$
Then if $L(v) \not = 0$ for $v = (1,u_1,\ldots,u_k)$, we have
$$\log |L(v)| \geq -C D^{2k+2} (\log B + \log(eD) + \log \log V_1)(\log (eD) + \log \log V_1)^{k+1} \prod_{i=1}^{k} (1 + \log V_i).$$
\end{theorem}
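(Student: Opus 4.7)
The plan is to deduce the statement directly from Theorem 1 of \cite{DHK02}, which is the original linear forms in elliptic logarithms estimate due to David and Hirata-Kohno. The form displayed here is obtained from theirs by collapsing the numerous auxiliary parameters into the three quantities $D$, $B$ and $V_i$. No new transcendence input is required: the work is entirely a matter of verifying that the hypotheses on $B$ and the $V_i$ stated here dominate the more elaborate system of parameters in the original formulation, and then absorbing any curve-dependent and absolute constants into a single constant $C$.

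More concretely, the original statement in \cite{DHK02} carries separate parameters for the heights $h(\beta_i)$ of the coefficients, the Neron--Tate heights $h_E(\gamma_i)$ of the points, the archimedean sizes $|u_i|^2/D$ of the elliptic logarithms (normalized by the degree of the field $K'$), and several curve-dependent quantities such as the size of a chosen period $\omega$ of $E$, the height of the $j$-invariant, and lower bounds for the injectivity radius of the exponential map. Since $E$ and the base field $K$ are fixed at the outset, all curve-dependent quantities are bounded by constants depending only on $E$ and $K$, so they may be absorbed into $C$. I would then check that the single quantity $\log B$ is an admissible upper bound for the maximum of the coefficient-height parameters, and that $\log V_i = \max\{e, h_E(\gamma_i), |u_i|^2/D\}$ dominates each of the three original parameters attached to the $i$-th elliptic logarithm. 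The monotonicity $V_1 \geq \cdots \geq V_k$ matches the ordering convention used in \cite{DHK02}, which is what allows the product $\prod_i(1 + \log V_i)$ to emerge cleanly.

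The main obstacle is careful bookkeeping: the original theorem is stated with several pages of definitions and uses separate symbols for what amount to the same quantities in different guises, so one must match the role of each parameter patiently. Once the substitutions are made, the lower bound from \cite{DHK02} specializes to
\[
\log|L(v)| \geq -C D^{2k+2} (\log B + \log(eD) + \log \log V_1)(\log(eD) + \log \log V_1)^{k+1} \prod_{i=1}^{k}(1 + \log V_i),
\]
after grouping constants, which is exactly what is claimed. No step requires touching the transcendence machinery underlying the original estimate; the argument is a verification that the weaker hypotheses imposed here are sufficient to trigger it.
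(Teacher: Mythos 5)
Your proposal matches the paper's treatment exactly: the paper gives no independent proof of this statement but presents it as Theorem 1 of \cite{DHK02} ``with some simplifications done,'' i.e., precisely the parameter-matching and constant-absorption you describe, with all curve-dependent quantities (period, $j$-invariant, injectivity radius) folded into $C$ since $E$ and $K$ are fixed. No further argument is needed or given in the paper.
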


\begin{corollary} \label{EllipticBaker2}
Let $E$ be an elliptic curve defined over a given number field $K$. Then there exists a constant $C > 0$ for which the following holds: Let $L/K$ be an extension of $K$ with degree $D$. Let $b_1,b_2$ a rational numbers and $\alpha$ an element of $E(L)$. Let 
$$\Lambda =  b_1  + b_2 \omega + \log_E \alpha$$
where our elliptic curve $E$ is given by the period lattice spanned by $\{1, \omega\}$. Let $B > 0$ be a constant such that $\log B > \max\{1,h(b_1),h(b_2)\}$. Then if $\Lambda \not = 0$, we have
$$\log |\Lambda| \geq -C D^6 (\log D + 1)^2 \log B (h_E(\alpha)+1) (\log^+ h_E(\alpha)+1)^4.$$
\end{corollary}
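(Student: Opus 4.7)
The plan is to apply Theorem \ref{EllipticBaker1} directly with $k=2$. Write $\Lambda = L(v)$ where $L(z_0,z_1,z_2) = b_1 z_0 + z_1 + b_2 z_2$ and $v = (1,u_1,u_2)$, taking $u_1 = \log_E \alpha$ chosen in a fundamental parallelogram for the period lattice of $E$, and $u_2 = \omega$ a generator of the period lattice. The corresponding elliptic points are $\gamma_1 = \alpha \in E(L)$ and $\gamma_2 = O \in E(K)$, the identity (interpreted as the degenerate limit of the Weierstrass parameterization at a period). The assumption $\Lambda \neq 0$ is exactly the nonvanishing hypothesis $L(v) \neq 0$ needed to invoke the theorem.

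Next, verify the parameter hypotheses and choose the constants. The coefficients $b_1, 1, b_2$ lie in $L$, and $\log B \geq \max\{1,h(b_1),h(b_2)\}$ is guaranteed by hypothesis. Since $u_1$ lies in the fundamental parallelogram, $|u_1|^2$ is bounded by a constant $C_E$ depending only on $E$, so the constraint $\log V_1 \geq |u_1|^2/D$ is harmless and we may take $\log V_1$ of order $\max\{e, h_E(\alpha)\} = O(h_E(\alpha)+1)$. For $V_2$, since $h_E(O)=0$ and $|\omega|^2$ is a constant, $\log V_2 = O_E(1)$; reorder $u_1,u_2$ if necessary so that $V_1 \geq V_2$, which will almost always be the case as $h_E(\alpha)$ grows.

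Applying Theorem \ref{EllipticBaker1} then yields
$$\log|\Lambda| \geq -CD^{6}(\log B+\log(eD)+\log\log V_1)(\log(eD)+\log\log V_1)^{3}(1+\log V_1)(1+\log V_2).$$
Substitute $(1+\log V_2)=O(1)$, $(1+\log V_1) = O(h_E(\alpha)+1)$, and $\log\log V_1 = O(\log^+ h_E(\alpha)+1)$. Using elementary bounds of the form $x+y \leq 2xy$ when $x,y \geq 1$ to combine the polylogarithmic factors and to extract a single $\log B$ factor (noting that $\log B \geq 1$), one arrives at the claimed inequality
$$\log|\Lambda| \geq -C'D^{6}(\log D+1)^{2}\log B\,(h_E(\alpha)+1)(\log^+ h_E(\alpha)+1)^{4}.$$

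The main obstacle is twofold. First, one must verify that Theorem \ref{EllipticBaker1} legitimately admits a period $\omega$ as one of the $u_i$'s (with $\gamma_i$ being the identity of $E$ at the degenerate value of the Weierstrass parameterization); this is the standard convention in David--Hirata-Kohno, but it is worth flagging. Second, and more delicate, is the bookkeeping of exponents when simplifying the polylogarithmic factors: the crude expansion of $(\log B+\log(eD)+\log\log V_1)(\log(eD)+\log\log V_1)^{3}$ a priori produces higher powers of $\log D+1$ than the stated $(\log D+1)^2$, so the combining step must be organized carefully (separating terms by which of $b=\log(eD)$ and $c=\log\log V_1$ dominates and majorizing accordingly) to land on precisely the exponents in the statement.
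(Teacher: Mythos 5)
Your proposal is correct and is essentially the paper's own proof, which consists of exactly this one step: apply Theorem \ref{EllipticBaker1} with $k=2$, taking one $u_i$ to be $\log_E\alpha$ (with $\log V$ of order $h_E(\alpha)+1$, since $|u_i|$ is bounded in a fundamental domain) and the other to be the period $\omega$ (with $\log V = O(1)$), the convention that periods are admissible with $\gamma_i = O$ being standard in David--Hirata-Kohno. (The paper's write-up swaps the labels $V_1,V_2$, but since the conclusion retains the $(\log^+ h_E(\alpha)+1)^4$ factor the intended ordering is clearly yours, with $V_1$ the larger.) Your second flagged obstacle is genuine and you are right to be suspicious: expanding $(\log B+\log(eD)+\log\log V_1)(\log(eD)+\log\log V_1)^{3}$ with $\log B, \log(eD), \log\log V_1 \geq 1$ yields a factor $(\log D+1)^{4}$, not $(\log D+1)^{2}$, and no reorganization fixes this --- in the regime where $D\to\infty$ with $h_E(\alpha)$ and $B$ bounded, the theorem's lower bound really is of size $D^{6}(\log D)^{4}$, so the corollary as stated is slightly too strong. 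This is harmless for the rest of the paper (Proposition \ref{EllipticBound1} and Theorem \ref{UniformIh4} have ample slack in the powers of $D$), but if you want a statement that literally follows from Theorem \ref{EllipticBaker1} you should write $(\log D+1)^{4}$.
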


\begin{proof}
We apply Theorem \ref{EllipticBaker1} with $k = 2$. Then $\log V_1$ may be taken as $O(1)$, and $\log V_2$ taken as $O(h_E(\alpha))$ as $|u_i|$'s are bounded. The result then follows immediately. 
\end{proof}

Observe that there is an extra $\log^+ h_E(\alpha)$ factor in Corollary \ref{EllipticBaker2}, as compared to Corollary \ref{BakerCorollary1}. This factor is the main obstruction in proving a uniform result like Theorem \ref{IntroUniformIh1} for elliptic curves in general. For elliptic curves with complex multiplication, Ably and Graudon \cite{AG03} have managed to remove the $\log^+ h_E(\alpha)$ factor, which will allow us to prove Theorem \ref{IntroUniformIh2}. We now state a special case of their theorem that we need, rephrased in quantities that we use in Theorem \ref{EllipticBaker1}. 

\begin{theorem}[Theorem 0.1 \cite{AG03}] \label{EllipticBakerCM1}
Let $k$ be a positive integer and let $E$ be an elliptic curve with complex multiplication that is defined over some number field $K$. There exists a constant $C > 0$ such that the following hold: Let $K'$ be a number field of degree $D$ over $K$ and $L(z) = \beta_0 z_0 + \cdots + \beta_k z_k$ be a non-zero linear form on $\bb{C}^{k+1}$ with coefficients in $K'$. Let $u_1,\ldots,u_k$ be complex numbers such that $\gamma_i = (1,\wp(u_i), \wp'(u_i)) \in E(K')$. Let $B,V_1,\ldots,V_k$ be real numbers satisfying
$$\log B \geq \max\{1, h(\beta_i); 0 \leq i \leq k \}$$
$$V_1 \geq \cdots \geq V_k$$
$$\log V_i \geq \max \{ e, h_{E}(\gamma_i), \frac{|u_i|^2}{D} \}.$$
Then if $L(v) \not = 0$ for $v = (1,u_1,\ldots,u_k)$, we have
$$\log |L(v)| \geq -C (D^n (1 + D \log (1+D) \prod_{i=1}^{n} \log V_i))$$
$$\times (D \log B + \log (e + \max_{1 \leq i \leq n} |u_i| + \log \max_{1 \leq i \leq n} \{1 , \frac{1}{|u_i|} \})).$$
\end{theorem}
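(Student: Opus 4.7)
The plan is to invoke Theorem 0.1 of \cite{AG03} directly: the statement above is a rephrasing of that result tailored to the notation used in Theorem \ref{EllipticBaker1}, and no new argument is required here. The only bookkeeping step is to check that our parameters $D$, $B$, $V_1,\dots,V_k$, $u_1,\dots,u_k$, and $\gamma_i = (1,\wp(u_i),\wp'(u_i))$ satisfy the hypotheses in \cite{AG03}, which is immediate from the definitions; the shape of the lower bound is then what they prove.

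For context, the mathematical content one is quoting is the absence of an extra $\log^+ h_E(\alpha)$ factor that appears in the general David--Hirata-Kohno bound (Theorem \ref{EllipticBaker1}). The CM hypothesis is what allows this improvement: Ably--Gaudron exploit the fact that the endomorphism ring of $E$ is an order in an imaginary quadratic field $F$, and hence acts on the period lattice by complex multiplication. Their argument follows the classical Baker--Masser--W\"ustholz transcendence scheme in three steps: (i) construct an auxiliary polynomial that vanishes to high order at many $F$-multiples of the point $v$, with coefficients controlled by a Siegel-type lemma; (ii) bound the growth of this function by a maximum modulus principle, using that the $u_i$ lie in a fixed fundamental domain so only $|u_i|^2/D$ contributes to analytic size; (iii) obtain a contradiction with a Philippon-style zero estimate unless $|L(v)|$ is at least as large as the claimed bound. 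The key gain from the CM hypothesis is precisely at step (iii): the enlarged set of $F$-translates at which one can force vanishing, compared with the $\mathbb{Z}$-translates available in the general case, removes one logarithmic factor from the final inequality.

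The main obstacle, were one to reprove this from scratch, would be the zero-estimate step, which rests on non-trivial algebraic geometry of subgroups of $E^k$; fortunately this is settled in \cite{AG03}. For our application in Theorem \ref{IntroUniformIh2}, the substantive task is not to reprove Theorem \ref{EllipticBakerCM1} but to verify its hypotheses in the Latt\`es setting: one must check that the integer coefficients $b_1,b_2$ arising from denominators of torsion points of $E$ are controlled by a suitable $\log B$, and that the elliptic logarithm $\log_E \alpha$ of the non-preperiodic point can be chosen inside a fundamental domain whose size contributes only to the $|u_i|^2/D$ term. These are routine but will be written out carefully when the theorem is applied.
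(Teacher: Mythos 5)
Your proposal matches the paper exactly: Theorem \ref{EllipticBakerCM1} is stated as a direct quotation (a special case, rephrased in the notation of Theorem \ref{EllipticBaker1}) of Theorem 0.1 of \cite{AG03}, and the paper supplies no independent proof beyond the citation. Your additional sketch of the Ably--Gaudron argument is accurate background but not required; invoking the reference is the intended and sufficient justification.
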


\begin{corollary} \label{EllipticBakerCM2}
Let $E$ be an elliptic curve with complex multiplication that is defined over a number field $K$ and let $D$ be a positive integer. Then there exists a constant $C$ such that the following holds: Let $b_1,b_2$ be two rational numbers and $\alpha$ an element of $E(L)$ where $[L:K] \leq D$. Let 
$$\Lambda =  b_1  + b_2 \omega + \log_E \alpha$$
where our elliptic curve $E$ is given by the period lattice spanned by $\{1, \omega\}$. Let $B > 0$ be a constant such that $\log B > \max\{1,h(b_1),h(b_2)\}$. Then if $\Lambda \not = 0$, we have
$$\log |\Lambda| \geq -C (h_E(\alpha)+1) \log B.$$
\end{corollary}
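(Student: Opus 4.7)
The approach is to derive Corollary \ref{EllipticBakerCM2} directly from Theorem \ref{EllipticBakerCM1} with $k = 2$, mirroring the derivation of Corollary \ref{EllipticBaker2} from Theorem \ref{EllipticBaker1} but now exploiting the cleaner shape of the Ably--Gaudron bound in the CM case. Concretely, I would apply the theorem with $K' = L$ (so $[K':K] \leq D$), linear form coefficients $(\beta_0, \beta_1, \beta_2) = (b_1, b_2, 1)$, and evaluation point $v = (1, u_1, u_2)$ where $u_1 = \log_E \alpha$ is chosen as a representative in the fixed fundamental domain $\Pi$ and $u_2 = \omega$. This gives $L(v) = b_1 + b_2 \omega + \log_E \alpha = \Lambda$, with associated curve points $\gamma_1 = \alpha \in E(L)$ and $\gamma_2 = O$. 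The hypothesis $\log B \geq \max\{1, h(b_1), h(b_2)\}$ matches the theorem's requirement on $\log B$ since $h(\beta_2) = h(1) = 0$.

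The next step is to control the auxiliary quantities $V_1, V_2$ and the $u_i$-dependent error terms. Since $\log_E \alpha \in \Pi$ and $|\omega|$ is itself a constant depending only on $E$, both $|u_1|$ and $|u_2|$ are bounded by a constant $c_E$, so $|u_i|^2/D \leq c_E^2$ and $\max_i |u_i| + \log \max_i\{1, 1/|u_i|\} = O_E(1)$. Combined with $h_E(\gamma_1) = h_E(\alpha)$ and $h_E(\gamma_2) = 0$, we may take $\log V_1 = O(1 + h_E(\alpha))$ and $\log V_2 = O(1)$, with the required ordering $V_1 \geq V_2$ automatic. Plugging these into Theorem \ref{EllipticBakerCM1} with $n = k = 2$, the first factor $D^n(1 + D\log(1+D)\prod_{i=1}^n \log V_i)$ becomes $O_D(1 + h_E(\alpha))$, while the second factor $D \log B + \log(e + \max_i |u_i| + \log \max_i\{1, 1/|u_i|\})$ is $O_D(\log B)$ after absorbing the additive $O_E(1)$ into $\log B \geq 1$. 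Their product yields the desired estimate $\log|\Lambda| \geq -C(h_E(\alpha)+1)\log B$ for a constant $C$ depending only on $E$ and $D$.

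The only genuine subtlety is interpreting $\gamma_2$ when $u_2 = \omega$ is an exact period, since then $\wp(u_2)$ is undefined; most formulations of this kind of theorem admit this case by identifying $\gamma_2$ with $O$ in projective coordinates, but if the stated version insists on an affine representation, one instead takes $u_2 = \omega/2$ (a half-period, so that $\gamma_2$ is a $2$-torsion point lying in the $2$-division field of $E$, a fixed finite extension of $K$) and replaces $b_2$ by $2b_2$; this changes $h(b_2)$ by at most $\log 2$ and can be absorbed into $\log B$ at the cost of a harmless constant. Beyond this mild technicality the proof is pure bookkeeping, and I anticipate no conceptual obstacle.
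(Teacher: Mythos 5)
Your proposal is correct and follows essentially the same route as the paper: a direct application of Theorem \ref{EllipticBakerCM1} with $k=2$, bounded $|u_i|$, $\log V_1 = O(1+h_E(\alpha))$, $\log V_2 = O(1)$, and $D$ treated as a constant absorbed into $C$. Your treatment is in fact more careful than the paper's terse proof (which glosses over the $V_1 \geq V_2$ ordering and the full-period issue you resolve via a half-period); the only blemish is the indexing of the coefficient tuple $(b_1,b_2,1)$, which should pair $b_2$ with $u_2=\omega$ and $1$ with $u_1=\log_E\alpha$ to yield $L(v)=\Lambda$, but this is a cosmetic slip that does not affect the argument.
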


\begin{proof}
This follows immediately from Theorem \ref{EllipticBakerCM1}, where now since we fix an upper bound of the degree $D$ and do not care about the dependency on $D$, we can treat $D$ as a constant. Then we may choose $|u_i|$ to be some bounded domain away from the origin, so that both $|u_i|$ and $\frac{1}{|u_i|}$ are bounded. We are then left with the $\prod_{i=1}^{n} \log V_i$ term, which can be chosen to be $O(\log_E h(\alpha))$ as desired.
\end{proof}

As an immediate corollary, we obtain the following. 

\begin{proposition} \label{EllipticCMBound1}
Let $E$ be an elliptic curve with complex multiplication, $v$ an archimedean place of $K$ and let $D$ be a positive integer. Then for any $\epsilon > 0$, there exists a constant $C_{\epsilon}$ such that for any $\beta \in E(\ovl{K})$ with $\deg_K(\beta) \leq D$ and any $\Gal(\ovl{K}/K)$-orbit $F$ of torsion points, we have
$$\max_{x \in F} \lambda_v(x-\beta) \leq C_{\epsilon} (h_{E}(\beta)+1)|F|^{\epsilon}$$
\end{proposition}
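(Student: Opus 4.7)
The plan is to lift the problem from $\bb{P}^1$ to the CM elliptic curve $E$ underlying $\vphi$, reducing the estimate on $|x - \beta|_v^{-1}$ to a lower bound on an elliptic linear form, to which Corollary \ref{EllipticBakerCM2} can be applied. This parallels the archimedean half of Proposition \ref{PowerBound1}, with the Weierstrass $\wp$-function (or $\wp'$) replacing the ordinary exponential.

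First, I fix a preimage $\alpha \in E(\ovl{K})$ of $\beta$ under $\pi$. Since $\pi$ has degree at most $3$, $\alpha$ lives in an extension of $K$ of degree at most $3D$, and by the standard comparison between the Weil height on $\bb{P}^1$ and the Néron--Tate height on $E$, one has $h_E(\alpha) \leq C_1 (h(\beta) + 1)$ for a constant $C_1$ depending only on $\vphi$. Similarly, each preperiodic point $x \in F$ lifts to a torsion point $T_x \in E$ of some order $n$. Because the Galois action on $E[n]$ for a CM elliptic curve has image of bounded index in $(O_F/n)^{\times}$, the $\Gal(\ovl{K}/K)$-orbit of a primitive $n$-torsion point has size at least $c_0 n$ (in fact nearly $n^2$) for an absolute constant $c_0 > 0$; since preperiodic points of $\vphi$ correspond to at most two torsion points under $\pi$, we conclude $n \leq C_2 |F|$, hence $\log n = O(\log |F|)$.

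Second, I use Proposition \ref{DistanceBound} (or Proposition \ref{DistanceBound2} when $\pi = y$) to transfer the distance estimate from $\bb{P}^1$ to $\bb{C}/\Lambda$: there exist lifts $z, w \in \bb{C}$ with $\pi(z) = x$, $\pi(w) = \beta$ and
\[
|z - w| \leq C_3\, |x - \beta|_v^{1/2} \max\{|x|_v^{1/2}, 1\}.
\]
At an archimedean place the Julia set of $\vphi$ is compact, so $|x|_v$ is bounded in terms of $\vphi$ alone; hence $|z - w| \leq C_4 |x - \beta|_v^{1/2}$. Writing $z = b_1 + b_2 \omega$ with $b_1, b_2 \in \bb{Q}$ of denominator dividing $n$, so that $h(b_i) = O(\log n)$, one has
\[
z - w \;=\; b_1 + b_2 \omega - \log_E \alpha,
\]
and this quantity is nonzero because $\beta$ is non-preperiodic, so $\alpha$ is not torsion on $E$. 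Applying Corollary \ref{EllipticBakerCM2} with $B = n^{O(1)}$ yields
\[
\log|z - w| \;\geq\; -C_5 \, (h_E(\alpha) + 1)\log n,
\]
and combining this with the distance comparison and $h_E(\alpha) = O(h(\beta)+1)$, $\log n = O(\log |F|)$ gives
\[
\log|x - \beta|_v^{-1} \;\leq\; C_6 (h(\beta) + 1) \log |F| + O(1) \;\leq\; C_\epsilon (h(\beta) + 1)\, |F|^{\epsilon}
\]
for any $\epsilon > 0$, since $\log |F| \ll_\epsilon |F|^\epsilon$.

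The steps that require the most care are the nonvanishing of the elliptic linear form, which uses that $\beta$ is non-preperiodic, and the polynomial lower bound $|F| \gg n$ on the Galois orbit of a torsion point of order $n$, which relies crucially on the CM hypothesis to ensure the Galois image in $(O_F/n)^{\times}$ has bounded index. Without CM, the $\log^+ h_E(\alpha)$ factor in Corollary \ref{EllipticBaker2} would appear in the bound, which is precisely the obstruction that forces the weaker form with exceptional orbits in Theorem \ref{IntroUniformIh4}.
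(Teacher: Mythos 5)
Your proposal follows essentially the same route as the paper: lift the estimate to the elliptic curve via Propositions \ref{DistanceBound}/\ref{DistanceBound2}, write the torsion logarithm as $\frac{a_1}{n}+\frac{a_2}{n}\omega$ so that $\log B\asymp\log n$, apply the CM linear-forms bound of Corollary \ref{EllipticBakerCM2}, and convert $\log n$ into $|F|^{\epsilon}$ using a polynomial lower bound on the Galois orbit of an $n$-torsion point. Two corrections are needed. First, your claim that $|x|_v$ is bounded ``because the Julia set is compact'' is false for Lattès maps: the Julia set is all of $\bb{P}^1(\bb{C})$, so preperiodic points accumulate at $\infty$ and $|x|_v$ is unbounded on $F$. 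The factor $\max\{|\cdot|_v^{1/2},1\}$ from Proposition \ref{DistanceBound} must instead be absorbed via the height, using $\log^+|\beta|_v \leq [K(\beta):\bb{Q}]\,h(\beta)$ together with $|x|_v\leq|\beta|_v+\delta$ once $|x-\beta|_v<\delta$ (there being nothing to prove otherwise); this is exactly the paper's remark that $h(x)>\frac{1}{D}\log^+|x|^{1/2}$, and it is why the constant may implicitly depend on $D$. Second, the orbit lower bound does not ``rely crucially'' on CM: the paper invokes (45)--(46) of \cite{BIR08}, which give $|F|>cn/(\log\log n)^2$ for an arbitrary elliptic curve, and the identical bound is used in the non-CM Proposition \ref{EllipticBound1}. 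The sole role of the CM hypothesis is the replacement of Corollary \ref{EllipticBaker2} by Corollary \ref{EllipticBakerCM2}, i.e.\ the removal of the $\log^+h_E(\alpha)$ factor, which you do correctly identify as the decisive point. With these repairs your argument coincides with the paper's proof.
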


\begin{proof}
Let $n$ be the order of $x$ as a torsion point. Then $\log_E x$ can be written as $\frac{a_1}{n} + \frac{a_2}{n} \omega$ for some positive integers $a_1,a_2$. Applying Corollary \ref{EllipticBakerCM2}, we obtain that
$$\log |\log_E \beta - \log_E x|^{-1}_v < C (h(\alpha)+1) \log n.$$
Now since $|F| > \frac{cn}{(\log \log n)^2}$ by say (45) and (46) of \cite{BIR08}, and we can replace $\log n$ with $|F|^{\epsilon}$. Finally, by Proposition \ref{ArchRegBound1}, we know that 
$$\left| \lambda_v(x-y) - \log |\log_E \beta - \log_E x |^{-1}_v  \right|\leq O(1)$$
and so we obtain the corresponding bound for $\lambda_v(x,\beta)$ too. 
\end{proof}

For a general elliptic curve $E$, applying Theorem \ref{EllipticBaker1} directly is not enough to prove the bound we need, as there is an extra factor of $\log^+ h(\alpha)$. Instead, we will exploit the fact that torsion points cannot be too close to each other. 
\begin{proposition} \label{EllipticBound1}
Let $E$ be an elliptic curve over a number field $K$ and let $v$ be an archimedean place of $K$ that is extended to $\ovl{K}$. Then for any $\epsilon > 0$, there exists a constant $C_{\epsilon}$ such that the following holds: for all extensions $L/K$ with $[L:K] \leq D$ and $\beta \in E(L)$, we have
$$\max_{x \in F} \lambda_v(x-\beta) \leq C_{\epsilon}D^6 (\log D)^2 (h(\beta)+1)|F|^{\epsilon}$$
for all $\Gal(\ovl{K}/L)$-orbits $F$ of torsion points, where $|F| \geq D$, with the possible exception of one.
\end{proposition}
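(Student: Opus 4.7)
The plan is to argue by contradiction, exploiting that distinct preperiodic points of a Lattès map correspond to points of $E$ whose pairwise differences are torsion, and that non-zero torsion points of order at most $N$ on $\bb{C}/\Lambda$ have magnitude at least $c_E/N$. Rather than applying Corollary \ref{EllipticBaker2} directly---whose spurious $(\log^+ h(\beta))^4$ factor is precisely the obstruction---I would exploit this separation to show that at most one Galois orbit can be exceptionally close to $\beta$.

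By the reductions at the start of this section, I may assume $\pi: E \to \bb{P}^1$ is $\wp$ or $\wp'$, a $k$-to-one map with $k \in \{2,3\}$. Since $\psi = \hat\psi + b$ with $\hat\psi$ a non-trivial isogeny, the isogeny $\hat\psi - \mathrm{id}$ is surjective, so $\psi$ has a fixed point $P_0$; after conjugating by translation by $P_0$, $\psi$ becomes the group homomorphism $\hat\psi$, whose preperiodic points are exactly $E_{\mathrm{tors}}$. Thus preperiodic points of $\psi$ form the coset $P_0 + E_{\mathrm{tors}}$, and the difference of any two is torsion. Now suppose for contradiction that two distinct $\Gal(\ovl{K}/K)$-orbits $F_1, F_2$ both fail the desired bound. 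Choose $x_i \in F_i$ achieving $\max_{x \in F_i} \log|x-\beta|_v^{-1}$; by the triangle inequality, $\log|x_1-x_2|_v^{-1} > C_\epsilon D^6(\log D)^2(h(\beta)+1)\min(|F_1|,|F_2|)^\epsilon - \log 2$. Propositions \ref{DistanceBound} and \ref{DistanceBound2} supply preimages $\tau_i \in \bb{C}/\Lambda$ of $x_i$ with $|\tau_1-\tau_2|_v \leq C|x_1-x_2|_v^{1/k}\max\{|x_1|_v^{1/k},1\}$; combined with $\log^+|x_1|_v \leq Dh(\beta)+O(1)$, this gives $\log|\tau_1-\tau_2|_v^{-1} \geq \frac{1}{k}\log|x_1-x_2|_v^{-1} - O(Dh(\beta)+1)$.

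Writing $\tau_i = P_0 + s_i$ with $s_i$ torsion of order $n_i$, the non-zero torsion point $\tau_1-\tau_2 = s_1-s_2$ has order dividing $N := \lcm(n_1,n_2)$; since $E[N]$ consists of points $(a\omega_1+b\omega_2)/N$ on $\bb{C}/\Lambda$, we get $|\tau_1-\tau_2|_v \geq c_E/N$ for some $c_E > 0$ depending only on the period lattice. Invoking $n_i \leq |F_i|(\log\log n_i)^2$ from (45), (46) of \cite{BIR08}, I obtain $\log N \leq \log|F_1|+\log|F_2|+O(\log\log|F|)$. Chaining all the inequalities yields
$$C_\epsilon D^6(\log D)^2(h(\beta)+1)\min(|F_1|,|F_2|)^\epsilon \leq k\log N + O(Dh(\beta)+1) \leq O(\log|F_1|+\log|F_2|+Dh(\beta)+1),$$
which fails for $C_\epsilon$ sufficiently large in $\epsilon$ and $\vphi$ alone, giving the contradiction.

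The main obstacle I anticipate is that the fixed point $P_0$ need not be $K$-rational, so the identification with $P_0 + E_{\mathrm{tors}}$ is not a priori compatible with the $\Gal(\ovl{K}/K)$-action, complicating the relation between the torsion orders $n_i$ and the orbit sizes $|F_i|$. This can be handled by passing to a finite extension $K_0/K$ over which $P_0$ is rational---since $[K_0:K]$ depends only on $\vphi$, this costs only a bounded factor in $D$---and then applying the orbit size estimates of \cite{BIR08} over $K_0$ so that $n_i$ is effectively comparable to $|F_i|$.
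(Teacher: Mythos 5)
Your proposal discards the linear forms in elliptic logarithms entirely and tries to get "at most one exception" from torsion-point separation alone. This does not close. The separation bound you invoke gives $|\tau_1-\tau_2|_v \geq c_E/N$ with $N = \lcm(n_1,n_2)$, i.e.\ an upper bound on $\log|x_1-x_2|_v^{-1}$ of order $\log N \lesssim \log|F_1| + \log|F_2|$ --- \emph{logarithmic} in the orbit sizes, and in particular logarithmic in $\max(|F_1|,|F_2|)$. Your lower bound from the failure of both orbits is only of order $\min(|F_1|,|F_2|)^{\epsilon}$. So in the regime where one failing orbit is small and the other is enormous (say $\log|F_2| \gg C_\epsilon D^6(\log D)^2(h(\beta)+1)|F_1|^{\epsilon}$), your chained inequality is perfectly consistent and no contradiction arises: a torsion point of huge order $n_2$ is allowed to sit within $e^{-\text{const}\,(h(\beta)+1)}$ of a torsion point of small order, since their difference only needs to clear $c_E/N$ with $N \approx n_1 n_2$ huge. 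Your argument therefore cannot rule out one small exceptional orbit together with arbitrarily many very large ones.

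The paper's proof needs both ingredients. Corollary \ref{EllipticBaker2} (David--Hirata-Kohno), despite its spurious $(\log^+ h_E(\alpha)+1)^4$ factor, directly yields the desired inequality for any single orbit whose torsion order $n$ exceeds $h_E(\alpha)^{2/\epsilon}$, because then the extra logarithmic factors are absorbed into $|F|^{\epsilon}$ via $|F| \gg n/(\log\log n)^2$. The separation argument is then used only to show that if two orbits both fail, the one with the larger orbit is forced (by the closeness $|x_1-x_2| \leq 2e^{-C(h_E(\alpha)+1)|F_1|^{\epsilon}}$ and the $c_E/N$ separation) to have order exceeding $h_E(\alpha)^{2/\epsilon}$ --- at which point the Baker-type bound applies to it and contradicts its failure. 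Without the transcendence input there is no absolute bound on how close a single high-order torsion point can be to $\beta$, and the purely Diophantine comparison of two failing orbits is too weak. (Your other steps --- the reduction via Propositions \ref{DistanceBound} and \ref{DistanceBound2}, the identification of preperiodic points with a torsion coset, and the descent to a bounded extension for the fixed point --- are fine and consistent with the paper's setup.)
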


\begin{proof}
By Corollary \ref{EllipticBaker2}, if we write $x$ as $b_1 \omega_1 + b_2 \omega_2$, then we have the bound
$$\max_{x \in F} \log |\log_E \beta - x|^{-1}_v < C (\log B) D^4 (\log D)^2 (h_E(\beta)+1) (\log^+ h_E(\beta)+1)^4.$$
where $\log B = \max\{h(b_1),h(b_2)\}$. If $n$ is the order of $x$ as a torsion point, then $\log B = \log n$. By Proposition \ref{EllipticGalois1}, we know that $|F|  > \frac{cn^{1-\eps}}{D}$ for some constant $c > 0$ and since $|F| \geq D$, it follows that $|F|^2 \geq cn^{1-\eps}$. Thus if $n > h_E(\beta)^{2/\epsilon}$, we would get
\begin{equation} \label{eq:Elliptic2}
\max_{x \in F} \log |\log_E\beta - x|^{-1}_v < C D^6 (\log D)^2 (h_E(\beta)+1) |F|^{\epsilon}
\end{equation}
for an appropriate constant $C$. Now let's say we have two torsion points $x_1,x_2$ that do not satisfy the inequality (\ref{eq:Elliptic2}). Thus we certainly have
$$ \log | \log_E \beta - \log_E x_i|^{-1}_v > C (h_E(\beta)+1) |F_i|^{\epsilon}$$
where $F_i$ is the Galois orbit of $x_i$. Assuming that $|F_1| \leq |F_2|$. by triangle inequality we have that
$$|\log_E x_1 - \log_E x_2| \leq |\log_E x_1 - \log_E \beta| + |\log_E \alpha - \log_E x_2| \leq 2e^{-C (h_E(\beta)+1)|F_1|^{\epsilon}}.$$
It follows that if $x_1 = b_{1,1} \omega_1 + b_{2,1} \omega_2$ and $x_2 = b_{1,2} \omega_1 + b_{2,2}\omega_2$, then the lowest common multiple of the denominators of $b_{i,j}$ have to be at least $O(e^{-C h_E(\beta) |F_1|})$. In particular as $|F_2| \geq |F_1|$ we must have $\max \{ h(b_{1,2}), h(b_{2,2}) \} \geq \frac{C}{4} h_E(\alpha) |F_1|$. Increasing our constant $C$, we can guarantee that
$$|F_2| \geq \frac{1}{\frac{C}{4}h_E(\beta)} e^{\frac{C}{4} h_E(\beta)} \geq h_E(\beta)^{2/\epsilon}.$$
This contradicts the upper bound in (\ref{eq:Elliptic2}). Thus for a given $\beta$, we can find a constant $C > 0$ such that 
$$\max_{x \in F} \log |\log_E \beta - \log _E x |^{-1}_v < C D^6 (\log D)^2 (h(\beta)+1)^2 |F|^{\epsilon},$$
is true for all Galois orbit $F$ of torsion points, with the possibility of one exception. Similar to Proposition \ref{EllipticCMBound1}, we then deduce the corresponding statement for $\lambda_v(x-\beta)$ as desired.
\end{proof}
 
We now handle the non-archimedean places. This will follow from a similar argument to the case of $\bb{G}_m$, where we proved that no two roots of unity can be $p$-adically close to each other. First, we prove an upper bound on $\lambda_v(x)$ for any torsion point $x$. 

\begin{proposition} \label{NonArchimedeanBound1}
Let $E$ be an elliptic curve defined over a number field $K$ and $v$ a non-archimedean place. Then there exists $\delta > 0$, depending on $v$, such that for any $\beta \in E(K_v)$, there does not exist two torsion points $z_1,z_2$ satisfying $\lambda_v(\beta - z_i) \geq \delta$. 
\end{proposition}

\begin{proof}
In Corollary 3.6 of \cite{BIR08}, such a constant $\delta$ is proven when $\beta$ is fixed. It suffices to note that their proof does not depend on $\beta$. In the good reduction case, if we let $Nv$ denote the size of the residue field at $v$, they deduce that $||z,0||_v \leq (Nv)^{-\delta}$ where $z = z_1 - z_2$. This implies that if $y^2 = x^3 + Ax + B$ is a minimal Weierstrass model for $E/K_v$, then $\ord_v(x(z)), \ord_v(y(z)) \geq \frac{\delta}{\log Nv}$ which is a contradiction to Cassel's theorem for $\delta$ large enough. 
\par 
In the case where $E(\bb{C}_v)$ is a Tate curve, it is proven again that $$\ord_v(x(z)), \ord_v(y(z)) \geq \frac{\delta}{\log Nv},$$ 
which again contradicts Cassel's theorem for $\delta$ large enough. 
\end{proof}

Now similar to Proposition \ref{PowerBound2}, we will deduce the following proposition.

\begin{proposition} \label{NonArchimedeanBound2}
Let $K$ be a number field, $v$ a non-archimedean place along with an extension to $\ovl{\bb{Q}}$, and $D$ a positive integer. Given an elliptic curve $E$ defined over $K$ with semistable reduction, there exists constants $C,\delta > 0$ depending on $D$ such that for any $\beta \in E(L)$ with $[L:K] < D$ and torsion point $z$, we have
$$\lambda_v(z-\beta) \geq \delta + \lambda_v(\beta)$$
if $\deg_K(z) > C$. 
\end{proposition}

\begin{proof}
First, since there are only finitely many extensions of $K_v$ of degree $\leq D$, there exists a single finite extension $K'_w$ of $K_v$ such that $v$ embeds $L$ into $K'_w$ for any finite extension $L$ of degree $\leq D$. 
\par 
Let $E/K_v$ have a minimal Weierstrass model given by the equation $y^2 = x^3 + Ax + B$. Then by \cite[Section 3.2]{BP05}, we know that if $\lambda_v(z-\beta) > \frac{1}{12} \log^+|j_E|_v$, then $z-\beta \in E_0(\bb{C}_v)$ and 
$$\lambda_v(z-\beta) = \frac{1}{2} \log^+|x(z-\beta)|_v^{-1} + \frac{1}{12} \log^+|j_E|_v.$$
In particular taking $\delta$ as large as we want, we may assume that $\log|x(z-\beta)|_v \geq \delta' + \log |x(\beta)|_v$ for any $\delta' > 0$. Now by the addition formula, we know that 
$$x(z-\beta) = \left(\frac{y(\beta)-y(z)}{x(\beta)-x(z)} \right)^2 - x(z) + x(\beta).$$
In particular as $|y(z)|_v, |x(z)|_v$ are both bounded and $|y(\beta)|_v = |x(\beta)|_v^{3/2}$ if $|x(\beta)|_v$ is large enough, we would have 
$$|x(z-\beta)|_v \leq |x(\beta)|_v $$
for all $\beta$ with sufficiently large $|x(\beta)|_v$. This is a contradiction and hence $|x(\beta)|_v$ and thus $|y(\beta)|_v$ must both be bounded from above. For any fixed $\eps > 0$, taking $\delta'$ large enough implies that $|x(\beta) - x(z)|_v \leq \eps$.
\par 
Now let $M$ be an upper bound for $|x(\beta)|_v$.  Then we can cover the elements of $\bb{P}^1(K'_w)$ with valuation $\leq C$ with finitely many discs of radius $\eps$. For sufficiently small $\eps$, by Cassel's theorem there can be at most one torsion point such that $x(z)$ is in $D_i$. Since there are finitely many such discs, we may choose $\delta$ large enough so that $\lambda_v(z-\beta) \leq \delta$ for all $\beta$ satisfying $|x(\beta)|_v \leq M$. This then concludes our proof. 
\end{proof}

\subsection{Proof of Theorems \ref{IntroUniformIh2} and \ref{IntroUniformIh4}}

We now prove Theorems \ref{IntroUniformIh2} and \ref{IntroUniformIh4} in a similar fashion to Theorems \ref{IntroUniformIh1} and \ref{IntroUniformIh3}.
\par 
We first start with Theorem \ref{IntroUniformIh2}. Let $D$ be a positive integer and let $\beta \in E(L)$ with $[L:K] \leq D$ be a non-torsion point. Let $S'$ be the set of places of $L$ that are above those of $S$. Given any torsion point $x$ with $\Gal(\ovl{K}/L)$-orbit $F$, we have the following formula 
$$\frac{1}{|F|}\sum_{v \in M_L} \sum_{x \in F} N_v \lambda_v(x-\beta) = h_E(x-\beta) = h_E(\beta)$$
as $h_E(x) = 0$. Now assume that $\beta$ is $S$-integral relative to $x$ and assume that $S$ contains all places of bad reduction for $E$. Then $\lambda_v(x-\beta) = 0$ for all $v \not \in S'$ and $x \in F$. Hence we have 
$$\frac{1}{|F|} \sum_{v \in S'} \sum_{x \in F} N_v \lambda_v(x-\beta) = h_E(\beta).$$

\begin{proof}[Proof of Theorem \ref{IntroUniformIh2}]
Let $N$ be the number of points in $F$ such that $\lambda_v(x-\beta) > 2 \log |F|$ and assume that $|F|$ is sufficiently large. By Proposition \ref{NonArchimedeanBound1}, when $v$ is non-archimedean we may take $N = 1$. When $v$ is archimedean, if $F$ consists of $m$-torsion points, we know that $|F| \geq O(\frac{m^{1/2}}{[L:\bb{Q}]})$ by Proposition \ref{EllipticGalois1}. Then there is at most $1$ torsion point that have distance $O(|F|^{-3})$ from $\beta$ as each $m$-torsion point is at least $\frac{c}{m}$ distance away for some $c > 0$. Hence again we may take $N = 1$.  
\par 
We wish to apply Proposition \ref{ArchQuantLogBound1} along with \eqref{eq: NonArchQuantLog1}. By Proposition \ref{EllipticCMBound1}, as $D$ is fixed, we may take $C$ to be $C_{\eps}(h_E(\beta)+1) |F|^{\eps}$ for archimedean places $v$ and by Proposition \ref{NonArchimedeanBound2}, we may take $C$ to be $\delta + O( h_E(\beta)) $ for non-archimedean places. As $D$ is fixed and $N \leq 1$, we obtain
\begin{equation} \label{eq: QuantLogBound1}
\left| \frac{1}{|F|} \sum_{v \in S'} \sum_{x \in F} N_v \lambda_v(x-\beta) \right| \leq O\left( \frac{h_E(\beta)+1}{|F|^{1-\eps}} + \frac{\log |F|}{|F|^{1/2-\eps}} \right).
\end{equation}
Now taking $|F|$ sufficiently large, as $\int \lambda_v(x-\beta) d \mu_v = 0$, we obtain 
$$h_E(\beta) = \frac{1}{|F|} \sum_{v \in S'} \sum_{x \in F} N_v \lambda_v(x-\beta) \leq \frac{\eps (h_E(\beta)+1)}{D^4}$$
for any desired $\eps > 0$. But we know that $h_E(\beta) = O(\frac{1}{D^{3+\eps}})$ by \cite{Mas89} which gives a contradiction. 
\end{proof}

\begin{proof}[Proof of Theorem \ref{IntroUniformIh4}] Let $D = [L:K]$. We may assume that our $\Gal(\ovl{K}/L)$-orbit of torsion points $F$ satisfies $|F| \geq D$. By Propositions \ref{EllipticBound1} and \ref{NonArchimedeanBound1}, outside of possibly $|S|$ $\Gal(\ovl{K}/L)$-orbits of torsion points as exceptions, we may take $C = C_{\eps}D^{6 + \eps} (h_E(\beta)+1)|F|^{\eps}$ as an upper bound for $\max_{x \in F} \lambda_v(x-\beta)$ for both archimedean and non-archimedean places. 
\par 
Hence by Proposition \ref{ArchQuantLogBound1} and \eqref{eq: NonArchQuantLog1}, we get
$$ h_E(\beta) = \frac{1}{|F|} \sum_{v \in S} \sum_{x \in F} N_v \lambda(x-\beta) $$
$$\leq |S||F|^{-1+\eps} O_{\eps}\left( D^{7+\eps} (h_E(\beta)+1) \right) + O_{\eps}\left( D |S| |F|^{-1/2+ \eps}\right).$$
Taking $|F| \geq c |S|^3 D^{11}$ for sufficiently large $c$ gives us
$$c'h_E(\beta) \leq  D^{-3-\eps} (h_E(\beta)+1)$$
for any $c' > 0$ we wish. But we know that $h_E(\beta) \geq O\left(D^{-3+\eps} \right)$ by \cite{Mas89} which gives us a contradiction as desired. 
\end{proof}

\printbibliography

@article {Ing09,
    AUTHOR = {Ingram, Patrick},
     TITLE = {Multiples of integral points on elliptic curves},
   JOURNAL = {J. Number Theory},
  FJOURNAL = {Journal of Number Theory},
    VOLUME = {129},
      YEAR = {2009},
    NUMBER = {1},
     PAGES = {182--208},
}

@book {Sil94,
    AUTHOR = {Silverman, Joseph H.},
     TITLE = {Advanced topics in the arithmetic of elliptic curves},
    SERIES = {Graduate Texts in Mathematics},
    VOLUME = {151},
 PUBLISHER = {Springer-Verlag, New York},
      YEAR = {1994},
     PAGES = {xiv+525},
}

@article {Pet09,
    AUTHOR = {Petsche, Clayton},
     TITLE = {Nonarchimedean equidistribution on elliptic curves with global
              applications},
   JOURNAL = {Pacific J. Math.},
  FJOURNAL = {Pacific Journal of Mathematics},
    VOLUME = {242},
      YEAR = {2009},
    NUMBER = {2},
     PAGES = {345--375},
}

@article {BP05,
    AUTHOR = {Baker, Matthew and Petsche, Clayton},
     TITLE = {Global discrepancy and small points on elliptic curves},
   JOURNAL = {Int. Math. Res. Not.},
  FJOURNAL = {International Mathematics Research Notices},
      YEAR = {2005},
    NUMBER = {61},
     PAGES = {3791--3834},
}

@article {Dra13,
    AUTHOR = {Ghioca, Dragos and Hsia, Liang-Chung},
     TITLE = {Torsion points in families of {D}rinfeld modules},
   JOURNAL = {Acta Arith.},
    VOLUME = {161},
      YEAR = {2013},
    NUMBER = {3},
     PAGES = {219--240},
       DOI = {10.4064/aa161-3-2},
}

@article {Dra14,
    AUTHOR = {Ghioca, Dragos},
     TITLE = {Integral points for {D}rinfeld modules},
   JOURNAL = {J. Number Theory},
    VOLUME = {140},
      YEAR = {2014},
     PAGES = {93--121},
       DOI = {10.1016/j.jnt.2014.01.013},
}

@book {Ben19,
    AUTHOR = {Benedetto, Robert L.},
     TITLE = {Dynamics in one non-archimedean variable},
    SERIES = {Graduate Studies in Mathematics},
    VOLUME = {198},
 PUBLISHER = {American Mathematical Society, Providence, RI},
      YEAR = {2019},
     PAGES = {xviii+463},
       DOI = {10.1090/gsm/198},
}

@article {AG03,
    AUTHOR = {Ably, Mohammed and Gaudron, \'{E}ric},
     TITLE = {Approximation diophantienne sur les courbes elliptiques \`a
              multiplication complexe},
   JOURNAL = {C. R. Math. Acad. Sci. Paris},
  FJOURNAL = {Comptes Rendus Math\'{e}matique. Acad\'{e}mie des Sciences. Paris},
    VOLUME = {337},
      YEAR = {2003},
    NUMBER = {10},
     PAGES = {629--634},
       DOI = {10.1016/j.crma.2003.09.018},
}

@misc{You22,
    AUTHOR = {Young, Marley},
     TITLE = {Effective bounds on S-integral preperiodic points for polynomials},
      YEAR = {2022},
      publisher = {arXiv},
    DOI = {10.48550/arXiv.2206.14252},
}

@article{DHK02,
AUTHOR = {David, Sinnou and Hirata-Kohno, Noriko},
     TITLE = {Recent progress on linear forms in elliptic logarithms},
 BOOKTITLE = {A panorama of number theory or the view from {B}aker's garden
              ({Z}\"{u}rich, 1999)},
     PAGES = {26--37},
 PUBLISHER = {Cambridge Univ. Press, Cambridge},
      YEAR = {2002},
       doi = {10.1017/CBO9780511542961.004},
}

@article{Sil95,
    AUTHOR = {Silverman, Joseph H.},
     TITLE = {Exceptional units and numbers of small {M}ahler measure},
   JOURNAL = {Experiment. Math.},
  FJOURNAL = {Experimental Mathematics},
    VOLUME = {4},
      YEAR = {1995},
    NUMBER = {1},
     PAGES = {69--83},
}

@article{SUZ97,
    AUTHOR = {Szpiro, L. and Ullmo, E. and Zhang, S.},
     TITLE = {\'{E}quir\'{e}partition des petits points},
   JOURNAL = {Invent. Math.},
  FJOURNAL = {Inventiones Mathematicae},
    VOLUME = {127},
      YEAR = {1997},
    NUMBER = {2},
     PAGES = {337--347},
       DOI = {10.1007/s002220050123},
}

@article{Bil97,
    AUTHOR = {Bilu, Yuri},
     TITLE = {Limit distribution of small points on algebraic tori},
   JOURNAL = {Duke Math. J.},
  FJOURNAL = {Duke Mathematical Journal},
    VOLUME = {89},
      YEAR = {1997},
    NUMBER = {3},
     PAGES = {465--476},
       DOI = {10.1215/S0012-7094-97-08921-3},
}

@article{Dob79,
    AUTHOR = {Dobrowolski, E.},
     TITLE = {On a question of {L}ehmer and the number of irreducible
              factors of a polynomial},
   JOURNAL = {Acta Arith.},
  FJOURNAL = {Polska Akademia Nauk. Instytut Matematyczny. Acta Arithmetica},
    VOLUME = {34},
      YEAR = {1979},
    NUMBER = {4},
     PAGES = {391--401},
       DOI = {10.4064/aa-34-4-391-401},
}

@misc{Fil17,
    title={A metric of mutual energy and unlikely intersections for dynamical systems
},
    author={Paul Fili},
    year={2017},
    journal={arXiv},
    doi={10.48550/arXiv.1708.08403},
    }

@article {DKY21,
    AUTHOR = {DeMarco, Laura and Krieger, Holly and Ye, Hexi},
     TITLE = {Common preperiodic points for quadratic polynomials},
   JOURNAL = {J. Mod. Dyn.},
  FJOURNAL = {Journal of Modern Dynamics},
    VOLUME = {18},
      YEAR = {2022},
     PAGES = {363--},
       DOI = {10.3934/jmd.2022012},\
}

@article{DKY20,
    AUTHOR = {DeMarco, Laura and Krieger, Holly and Ye, Hexi},
     TITLE = {Uniform {M}anin-{M}umford for a family of genus 2 curves},
   JOURNAL = {Ann. of Math. (2)},
  FJOURNAL = {Annals of Mathematics. Second Series},
    VOLUME = {191},
      YEAR = {2020},
    NUMBER = {3},
     PAGES = {949--1001},
       DOI = {10.4007/annals.2020.191.3.5},
}

@article{Mas89,
    AUTHOR = {Masser, D. W.},
     TITLE = {Counting points of small height on elliptic curves},
   JOURNAL = {Bull. Soc. Math. France},
  FJOURNAL = {Bulletin de la Soci\'{e}t\'{e} Math\'{e}matique de France},
    VOLUME = {117},
      YEAR = {1989},
    NUMBER = {2},
     PAGES = {247--265},
}

@article{GI13,
    AUTHOR = {Grant, David and Ih, Su-Ion},
     TITLE = {Integral division points on curves},
   JOURNAL = {Compos. Math.},
  FJOURNAL = {Compositio Mathematica},
    VOLUME = {149},
      YEAR = {2013},
    NUMBER = {12},
     PAGES = {2011--2035},
       DOI = {10.1112/S0010437X13007318},
}

@article{BR06,
    AUTHOR = {Baker, Matthew H. and Rumely, Robert},
     TITLE = {Equidistribution of small points, rational dynamics, and
              potential theory},
   JOURNAL = {Ann. Inst. Fourier (Grenoble)},
  FJOURNAL = {Universit\'{e} de Grenoble. Annales de l'Institut Fourier},
    VOLUME = {56},
      YEAR = {2006},
    NUMBER = {3},
     PAGES = {625--688},
}

@article{SH09,
    AUTHOR = {David, Sinnou and Hirata-Kohno, Noriko},
     TITLE = {Linear forms in elliptic logarithms},
   JOURNAL = {J. Reine Angew. Math.},
  FJOURNAL = {Journal f\"{u}r die Reine und Angewandte Mathematik. [Crelle's
              Journal]},
    VOLUME = {628},
      YEAR = {2009},
     PAGES = {37--89},
       DOI = {10.1515/CRELLE.2009.018},
}

@article{LMN95,
    AUTHOR = {Laurent, Michel and Mignotte, Maurice and Nesterenko, Yuri},
     TITLE = {Formes lin\'{e}aires en deux logarithmes et d\'{e}terminants
              d'interpolation},
   JOURNAL = {J. Number Theory},
  FJOURNAL = {Journal of Number Theory},
    VOLUME = {55},
      YEAR = {1995},
    NUMBER = {2},
     PAGES = {285--321},
       DOI = {10.1006/jnth.1995.1141},
}

@article{Bak75,
    AUTHOR = {Baker, Alan},
     TITLE = {Transcendental number theory},
 PUBLISHER = {Cambridge University Press, London-New York},
      YEAR = {1975},
     PAGES = {x+147},
}

@article{CL06,
    AUTHOR = {Chambert-Loir, Antoine},
     TITLE = {Mesures et \'{e}quidistribution sur les espaces de {B}erkovich},
   JOURNAL = {J. Reine Angew. Math.},
  FJOURNAL = {Journal f\"{u}r die Reine und Angewandte Mathematik. [Crelle's
              Journal]},
    VOLUME = {595},
      YEAR = {2006},
     PAGES = {215--235},
       DOI = {10.1515/CRELLE.2006.049},
}

@article{BR10,
    AUTHOR = {Baker, Matthew and Rumely, Robert},
     TITLE = {Potential theory and dynamics on the {B}erkovich projective
              line},
    JOURNAL = {Mathematical Surveys and Monographs},
    VOLUME = {159},
 PUBLISHER = {American Mathematical Society, Providence, RI},
      YEAR = {2010},
     PAGES = {xxxiv+428},
       DOI = {10.1090/surv/159},
}

@article{FRL06,
    AUTHOR = {Favre, Charles and Rivera-Letelier, Juan},
     TITLE = {\'{E}quidistribution quantitative des points de petite hauteur sur
              la droite projective},
   JOURNAL = {Math. Ann.},
  FJOURNAL = {Mathematische Annalen},
    VOLUME = {335},
      YEAR = {2006},
    NUMBER = {2},
     PAGES = {311--361},
       DOI = {10.1007/s00208-006-0751-x},
}

@article{BIR08,
        AUTHOR = {Baker, Matthew and Ih, Su-ion and Rumely, Robert},
     TITLE = {A finiteness property of torsion points},
   JOURNAL = {Algebra Number Theory},
  FJOURNAL = {Algebra \& Number Theory},
    VOLUME = {2},
      YEAR = {2008},
    NUMBER = {2},
     PAGES = {217--248},
       DOI = {10.2140/ant.2008.2.217},
}

@article{IT10,
    AUTHOR = {Ih, Su-Ion and Tucker, Thomas J.},
     TITLE = {A finiteness property for preperiodic points of {C}hebyshev
              polynomials},
   JOURNAL = {Int. J. Number Theory},
    VOLUME = {6},
      YEAR = {2010},
    NUMBER = {5},
     PAGES = {1011--1025},
       DOI = {10.1142/S1793042110003356},
}

@article{Pet08,
    AUTHOR = {Petsche, Clayton},
     TITLE = {{$S$}-integral preperiodic points by dynamical systems over
              number fields},
   JOURNAL = {Bull. Lond. Math. Soc.},
    VOLUME = {40},
      YEAR = {2008},
    NUMBER = {5},
     PAGES = {749--758},
       DOI = {10.1112/blms/bdn053},
}

@article{HS11,
    AUTHOR = {Hsia, Liang-Chung and Silverman, Joseph H.},
     TITLE = {A quantitative estimate for quasiintegral points in orbits},
   JOURNAL = {Pacific J. Math.},
  FJOURNAL = {Pacific Journal of Mathematics},
    VOLUME = {249},
      YEAR = {2011},
    NUMBER = {2},
     PAGES = {321--342},
       DOI = {10.2140/pjm.2011.249.321},
}

\end{document}